\documentclass[nonblindrev]{informs3noheader} %
\usepackage{multibib}
\newcites{app}{References}
\newcolumntype{Z}{>{\centering\arraybackslash}X}
\usepackage{tabularx}
\newcolumntype{R}{>{\raggedleft\arraybackslash}X}
\usepackage{textcomp}
\usepackage{algorithm}
\usepackage{algpseudocode}
\usepackage{url}
\usepackage{bbm}
\usepackage{graphicx}
\usepackage{mathrsfs} 
\usepackage{amsfonts,amssymb} 
\usepackage{threeparttable}
\usepackage{amsmath}
\usepackage{color}
\usepackage{titlesec}
\usepackage{indentfirst}  
\usepackage{enumerate}
\usepackage{bm} 
\usepackage{booktabs} 
\usepackage{makecell}
\usepackage{array}
\usepackage{multirow}
\usepackage{longtable}
\usepackage{subfigure}
\usepackage{balance}
\usepackage{color}
\usepackage[colorlinks, linkcolor=blue,
            citecolor=blue ]{hyperref}
\usepackage{changes}
\usepackage{rotating}
\usepackage{appendix}
\usepackage{adjustbox}
\usepackage{lscape}
\newtheorem{Theorem}{Theorem}

\newtheorem{Lemma}{Lemma}
\newtheorem{Corollary}{Corollary}
\newtheorem{Proposition}{Proposition}
\newtheorem{Remark}{Remark}

\usepackage{natbib}
 \bibpunct[, ]{(}{)}{,}{a}{}{,}%
 \def\bibfont{\small}%

\usepackage{lineno}

\OneAndAHalfSpacedXI 
\TheoremsNumberedThrough     
\EquationsNumberedThrough    

\allowdisplaybreaks[4] 
\usepackage{setspace}

\begin{document}

\RUNTITLE{Traveling Salesman Tardiness}

\TITLE{Traveling Salesman Tardiness}


\ARTICLEAUTHORS{%
\AUTHOR{Haoyue Liu}
\AFF{Division of Logistics and Transportation, Shenzhen International Graduate School, Tsinghua University, Shenzhen 518055, China; \EMAIL{liu-hy22@mails.tsinghua.edu.cn} \URL{}}
\AUTHOR{Sheng Liu\thanks{The work was initiated when the authors visited the National University of Singapore (Department of Analytics and Operations, DAO). The authors thank DAO and the Institute of Operations Research and Analytics for the support and Prof. Melvyn Sim for the stimulating discussions that inspired the technical development of this paper. } }

\AFF{Rotman School of Management, University of Toronto, Toronto, Ontario M5S 3E6, Canada;\\
\EMAIL{sheng.liu@rotman.utoronto.ca}  \URL{}}
}

\ABSTRACT{How fragile is the routing time window of delivery systems against spatial distributional uncertainty?  We study the tardiness risk of Traveling Salesman Problem (TSP) solutions with respect to a service deadline (target) over the routing time. Using the robust satisficing model, we introduce the TSP tardiness index to quantify the target's fragility under distributional uncertainty in customer locations. Assuming there are $m$ potential customer locations from historical samples on a service region $D$ (of area $|D|$), we prove that the TSP tardiness index is $\Theta\left(\frac{n\sqrt{|D|m}}{\tau}\right)$ for $n$ realized locations with respect to the routing time target $\tau$, under non-boundary conditions. This result establishes a new scaling law that extends beyond the existing deterministic and probabilistic TSP bounds. We further extend it to a multi-vehicle case and derive simple partition rules for managing delivery systems. Our numerical experiments using synthetic and real-world routing data validate the value of the TSP tardiness index in characterizing and managing the overtime risk of routing systems. 
}

\KEYWORDS{traveling salesman problem, continuous approximation, robust satisficing, logistics}

\maketitle

\vspace{-10pt}
\section{Introduction}
The Traveling Salesman Problem (TSP) and many of its variants/extensions, including the Vehicle Routing Problem (VRP), underpin spatial service operations and supply chains, especially transportation and logistics systems. In particular, running modern parcel delivery operations hinges on designing efficient delivery routes that solve large-scale TSP or VRP instances. These routes are often subject to a delivery time window tied to the promised customer service level. For example, a same-day delivery guarantee can translate into an 8-hour delivery window across the entire route in a typical e-retail setting.\footnote{In this paper, we will use the terms delivery target and time window (more specifically, deadline) interchangeably depending on the context. } However, in practice, violations of the delivery time target are hard to avoid, even when one carefully plans the workload. One reason is that routing time performance is sensitive to variations in spatial demand distributions, and a slight shift in demand locations can have a sizable impact on routing time and the corresponding delay. Quantifying this vulnerability (i.e., target fragility or satisfiability) is practically relevant, as it helps not only generate insights into the logistics system's reliability but also prescribe tactical planning solutions, such as service region dimensioning and fleet sizing. However, the current theoretical understanding of the TSP route-target fragility is limited under distributional uncertainty. 

In this work, we propose a TSP-based risk measure, termed \textit{tardiness index}, that explicitly characterizes the relationship between the target fragility of TSP routes and the distribution of demand locations. Building on the robust satisficing framework, the tardiness index quantifies the fragility of the delivery-time target for a TSP routing system under limited knowledge about demand distributions in a two-dimensional plane. Specifically, it helps assess the risk of missing the delivery target and causing TSP route lateness due to locational uncertainty. We show that the tardiness index can be computed efficiently using a new formulation and derive closed-form scaling laws that demonstrate its dependence on key routing features. These rules can yield simple tactical design principles for real routing systems. We summarize the key contributions below. 

\subsection{Contributions}
The introduced TSP tardiness index addresses the spatial fragility of TSP route targets under distributional shifts in demand (location), measured by the Wasserstein distance to an empirical distribution. By leveraging the structural properties of the worst-case distribution, we derive a second-order cone program for the index computation enabled by quadrature discretization on the two-dimensional plane. This computational strategy achieves orders-of-magnitude improvements in solution time over the alternative cutting-plane method in the literature. Analytically, we prove lower and upper bounds of the TSP tardiness index. The main technical finding is that, under non-boundary cases, the tardiness index scales by $\frac{n\sqrt{|D|m}}{\tau}$, in terms of the number of realized demand points ($n$), the service region area ($|D|$), the number of empirical support points ($m$), and the routing time target ($\tau$). This scaling law is fundamentally different from the classic TSP scaling pattern featured by $\sqrt{n}$ and illustrates the inherent challenge of meeting the TSP route target as demand volume increases. We further extend the scaling law to multi-vehicle cases when the decision maker is concerned with the system's makespan. We then discuss the prescriptive implications of these results based on a region partition application. Empirically, we conduct numerical experiments based on synthetic and real-world routing data, which demonstrate that the TSP tardiness index can serve as a good proxy for out-of-sample overtime risk metrics for delivery routes (including overtime rate and magnitude) and inform more reliable routing system designs than alternative principles. 

\subsection{Related Work}
Stochastic and robust TSPs and VRPs have been extensively studied in the literature, where the source of uncertainty can stem from travel times, demand volume, and locations (see, e.g., \citealt{jaillet1985probabilistic,
laporte1992vehicle,campbell2008probabilistic,
adulyasak2016models, jaillet2016routing, zhang2021robust}, among others). The closest to our problem setting among this literature are \cite{carlsson2013robust} and  \cite{carlsson2018wasserstein}, where the authors assume limited information about the distribution of demand locations in TSPs. Particularly, \cite{carlsson2018wasserstein} consider a Wasserstein distance-based ambiguity set to capture the spatial distributional uncertainty, which we also adopt. However, they focus on algorithm development for districting applications, whereas we characterize the risk performance of the TSP route relative to a prespecified target. To our knowledge, we are the first to provide analytical characterizations of the target violation of TSP routes under spatial distributional uncertainty. 

Methodologically, our work builds on the emerging literature on target-oriented, robust satisficing models for decision making under uncertainty. The robust satisficing framework evaluates decisions based on their ability to meet operational targets, providing a more interpretable, performance-driven approach to risk management \citep{long2023robust}. This framework has been applied to a variety of transportation and logistics settings, such as inventory-routing and logistics network design \citep{che2026robust,hu2024optimal}, shared-mobility system planning \citep{chen2025robust,guo2023data}, on-time and on-demand last-mile delivery \citep{zhao2025analytics, zhao2026service}, and crowdsourcing logistics \citep{cheng2025robust}. 
Different from these studies, which primarily embed robust satisficing rules into specific operational optimization models, we aim to understand the structural properties of the risk measure in the presence of spatial distributional uncertainty. 


\section{TSP Tardiness Index and Its Computation}
This section introduces the tardiness index in the TSP context and presents an efficient formulation for its computation. 
\subsection{Preliminaries}
\label{sec:generalized_rs}
Consider a service region represented by a compact set $D \subset \mathbb{R}^2$ with nonempty interior and Lipschitz boundary. Throughout the paper, $|D|$ denotes the two-dimensional Lebesgue measure, or area, of $D$, and \(\Delta_D:=\mathrm{diam}(D)\) denotes its diameter. For a collection of $n$ demand points in $D$, the underlying spatial demand distribution is represented by a density $f$ belonging to $\mathcal{P}(D):=\left\{f\ge 0:\int_D f(x) dx=1\right\}$. The optimal Euclidean TSP tour length is characterized by the following theorem \citep{beardwood1959shortest}.
\begin{Theorem}[Beardwood-Halton-Hammersley (BHH) Theorem]
    For a set of $n$ points drawn independently from a continuous density $f \in \mathcal{P}(D)$, the length of the optimal TSP tour, denoted by $\mathcal{L}(f)$, satisfies
    \begin{align}
         \lim_{n\to\infty} \frac{\mathcal{L}(f)}{\sqrt{n}} = \beta \int_D \sqrt{f(x)}dx, 
    \end{align}
    \noindent where $\beta$ is the BHH constant.
\end{Theorem}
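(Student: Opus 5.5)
This is the classical Beardwood–Halton–Hammersley theorem, and the paper cites it rather than reproves it. Still, the standard route is as follows; all convergence is almost sure.

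\textbf{Step 1: uniform distribution on the unit square.} Take $n$ i.i.d.\ uniform points in $[0,1]^2$ and partition the square into $k^2$ subsquares of side $1/k$. Optimal subtours in each cell, joined by a boundary-traversing connection of length $O(k)$, give the near-subadditive inequality
\[
\mathcal{L}_n \le \sum_{\text{cells}} \mathcal{L}(\text{cell}) + C k .
\]
Because the problem is scale-invariant, each cell contributes $k^{-1}$ times a unit-square instance with Poisson-many points. It is cleaner to Poissonize: place a Poisson process of intensity $\lambda$ on $[0,1]^2$ and write $\phi(\lambda)=\mathbb{E}\mathcal{L}$. Subadditivity then gives $\phi(\lambda)\le k\,\phi(\lambda/k^2)+Ck$, and a Fekete-type argument yields $\phi(\lambda)/\sqrt{\lambda}\to\beta$. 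A matching superadditive bound comes from the boundary functional, which is the tour allowed to use the square's boundary at zero cost (Redmond–Yukich). It differs from $\mathcal{L}$ by $O(k)$ after subdivision, so the limit $\beta$ exists and is finite and positive. De-Poissonize by coupling $n$ points with Poisson$(n)$ points: adding or removing one point changes the tour length by at most $O(n^{-1/2})$ in expectation, using nearest-neighbour distances. Almost-sure convergence then follows from concentration. The tour length is $1$-Lipschitz under moving a point, so Azuma/Talagrand-type inequalities give deviations of order $O(1)$ around a mean of order $\sqrt n$, and Borel–Cantelli applies along all $n$.

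\textbf{Step 2: general continuous density $f$ on compact $D$.} Enclose $D$ in a square grid of side $1/k$ and approximate $f$ by the piecewise-constant $f_k$ on the cells. Conditional on cell counts $N_i\approx n\int_{Q_i} f$, the points in cell $Q_i$ are nearly uniform. Applying Step 1 in each cell gives
\[
\mathcal{L}(Q_i)\approx \beta\,\tfrac1k\sqrt{N_i}\approx \beta\sqrt{n}\,\sqrt{f_k(Q_i)}\,|Q_i| .
\]
Summing over cells gives an upper bound of $\beta\sqrt n\int\sqrt{f_k}+O(k)$. The lower bound uses the boundary functional's superadditivity across cells. Letting $n\to\infty$ and then $k\to\infty$, $\int\sqrt{f_k}\to\int\sqrt f$ by continuity and dominated convergence. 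The deviation from uniformity within a cell is handled by a thinning/coupling argument: $f$ is nearly constant on small cells by uniform continuity on the compact set $D$.

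\textbf{Main obstacle.} The hardest step is establishing the matching lower bound (superadditivity) together with the de-Poissonization. Exact superadditivity fails for the TSP, and the standard remedy is the boundary-rooted functional, which is the step I would carry out first and most carefully. Since the paper treats the theorem as a known result, I would ultimately cite \citet{beardwood1959shortest} together with Steele's subadditive Euclidean functional framework rather than reproduce the full argument.
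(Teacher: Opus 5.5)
The paper gives no proof of this statement. It quotes it from \citet{beardwood1959shortest} and uses it only to justify taking $\mathcal{L}(f)$ to be the expression $\beta\sqrt{n}\int_D\sqrt{f(x)}\,dx$ from then on. Your final decision to cite it, with the almost-sure qualifier that the paper's display leaves implicit, is therefore exactly the paper's treatment. Your sketch of the classical argument (subadditivity plus the boundary functional, Poissonization, step-density approximation with coupling) follows the standard Steele/Redmond--Yukich route and is sound in structure. One step, however, fails as you justify it.

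\textbf{The concentration step.} Moving a single point by $\delta$ can change the tour length by up to $2\delta$, not $\delta$. So in the unit square the bounded-difference constants are only $c_i=O(1)$. Azuma--Hoeffding then controls fluctuations at scale $(\sum_{i\le n}c_i^2)^{1/2}=O(\sqrt{n})$, the same order as the mean. Its bound on $P(|\mathcal{L}_n-\mathbb{E}\mathcal{L}_n|>\epsilon\sqrt{n})$ is a constant independent of $n$, so Borel--Cantelli cannot be applied. Getting $O(1)$ fluctuations needs finer input than a crude Lipschitz bound. There are two standard ways to supply it.

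\emph{Talagrand's route.} Use the convex-distance inequality with square-summable weights, namely edge lengths of a space-filling-curve tour, for which $\sum_i\alpha_i^2\le C$ in the unit square (Rhee--Talagrand).

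\emph{Steele's more elementary route.} With $F=\{X_j\}_{j\ne i}$ you have $\mathcal{L}(F)\le\mathcal{L}(F\cup\{y\})\le\mathcal{L}(F)+2d(y,F)$. Hence resampling $X_i$ changes $\mathcal{L}_n$ by at most $2\max\{d(X_i,F),d(X_i',F)\}$, whose second moment is $O(1/n)$. Efron--Stein then gives $\mathrm{Var}(\mathcal{L}_n)=O(1)$. Chebyshev plus Borel--Cantelli yields almost-sure convergence along $n_j=j^2$. Monotonicity of $\mathcal{L}_n$ in $n$ for nested samples fills in $j^2\le n<(j+1)^2$.

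That same monotonicity is also what your Fekete step implicitly needs to pass from $\lambda=k^2\mu$ to all $\lambda$.
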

The BHH theorem is widely used in the routing literature for a first-order approximation of the TSP route length \citep{stroh2022tactical, banerjee2022fleet, carlsson2024provably}. In practice, finite-sample deviations from the BHH formula are expected, primarily due to boundary and shape effects, but are partially absorbed through calibration of the BHH constant $\beta$. The accuracy of such calibrated approximations for planning-level applications has been empirically validated across a range of problem sizes (e.g., up to 7\% errors when $n\geq 50$) and service-region geometries \citep{daganzo1984distance,kwon1995estimating, shen2007incorporating}. In what follows, we use $\mathcal{L}(f)$ to denote the corresponding BHH-based routing length induced by spatial demand density $f$. Without loss of generality, we assume the vehicle travels at a constant speed normalized to one, so tour length and routing time are expressed in the same units. 

The critical input to the TSP route-length computation is the spatial demand density $f$, which cannot be precisely known and must be estimated from data. Given historical demand locations $x_1,\ldots,x_m \in D$, we represent the observed spatial demand pattern by the empirical distribution $\widehat{P}_b=\frac{1}{m}\sum_{r=1}^m \delta_{x_r}$, where $\delta_{x_r}$ denotes the Dirac point mass at $x_r$. Because $\widehat{P}_b$ may deviate from $f$, the routing time estimates and the corresponding delivery planning based on $\widehat{P}_b$ can differ from the ground truth, leaving the system vulnerable to delivery time target violations. To quantify this vulnerability, we follow the satisficing-measure framework of \citet{brown2009satisficing} and adapt the robust satisficing fragility measure of \citet[Definition~2]{long2023robust} to our TSP routing time setting. Specifically, for any target-violation functional $\Psi:\mathcal{P}(D)\to\mathbb{R}$, the \textit{fragility measure} can be defined as
\begin{align}
    \rho(\Psi):=
    \inf\left\{
    k\ge 0:
    \Psi(f)\le k\,W_1(P_f,\widehat{P}_b),\ \forall f\in\mathcal{P}(D)
    \right\},
\end{align}
where $P_f$ denotes the probability measure induced by density $f$ and $W_1(\cdot,\cdot)$ denotes the first-order Wasserstein distance. With a slight abuse of notation, we write $W_1(f,\widehat P_b)$ in place of $W_1(P_f,\widehat P_b)$. Thus, $\rho(\Psi)$ is the smallest proportionality factor that uniformly controls target violation relative to the Wasserstein distance from the empirical distribution; a smaller value indicates lower target fragility and vulnerability to spatial shifts in the demand distribution or misspecification.

\subsection{TSP Tardiness Index}
Suppose the decision-maker specifies a target threshold $\tau$ applied to the TSP routing time.\footnote{We assume the main source of uncertainty is locational in this study. Other time components in a routing system, such as stop-level service time, are assumed to have more reliable estimates, and the specified target already excludes those components. In the real-world numerical study, we control for service-time variability to assess the TSP tardiness index separately. } Let the target-violation functional be $\Psi_\tau(f):=\mathcal{L}(f)-\tau$, and we define the \textit{TSP tardiness index} as
\begin{align}
    \rho_\tau(\mathcal{L}) :=\rho(\Psi_\tau) =
    \inf\left\{k\ge 0:k\ge\sup_{f\in\mathcal{P}(D)}\frac{\bigl(\mathcal{L}(f)-\tau\bigr)^+}{W_1(f,\widehat{P}_b)}
    \right\}.
\end{align}

The index $\rho_\tau(\mathcal{L})$ quantifies the worst-case excess routing time per unit Wasserstein deviation from the empirical distribution. Equivalently, for any density $f$, the excess routing time above $\tau$ is bounded by $\rho_\tau(\mathcal{L})W_1(f,\widehat{P}_b)$, hence a smaller index indicates that the prescribed routing time target is more reliable under spatial demand changes. The following remark formalizes the properties inherited from the robust satisficing framework. These properties support that \(\rho\), and therefore $\rho_\tau$, is a valid target-fragility measure: it is monotone, positively homogeneous, subadditive, and pro-robust when the target is uniformly satisfied.

\begin{Remark}
As formalized in \citet[Theorem~2]{long2023robust}, for any target-violation functional 
\(\Psi:\mathcal P(D)\to\mathbb R\), the functional \(\rho\) satisfies:
\begin{enumerate}[(i)]
    \item \emph{Monotonicity}: If $\Psi_1(f)\le \Psi_2(f)$ for all $f\in\mathcal{P}(D)$, then $\rho(\Psi_1)\le \rho(\Psi_2)$.
    \item \emph{Positive homogeneity}: For any $a\ge 0$, $\rho(a\Psi)=a\,\rho(\Psi)$.
    \item \emph{Subadditivity}: For any $\Psi_1,\Psi_2:\mathcal{P}(D)\to\mathbb{R}$, $\rho(\Psi_1+\Psi_2)\le \rho(\Psi_1)+\rho(\Psi_2)$.
    \item \emph{Prorobustness}: If $\Psi(f)\le 0$ for all $f\in\mathcal{P}(D)$, then $\rho(\Psi)=0$.
\end{enumerate}
The antifragility property in \citet[Theorem~2]{long2023robust} requires an extension to the nominal measure and is therefore not used in our density-based BHH analysis.
\end{Remark}
To facilitate the analysis and computation of $\rho_\tau(\mathcal{L})$, it is convenient to introduce the fixed-$k$ value function:
\begin{align}
    V_D(k):=\sup_{f\in \mathcal{P}(D)}\Bigl\{\beta\sqrt{n}\int_D \sqrt{f(x)}\,dx-kW_1(f,\widehat{P}_b)\Bigr\}.
\end{align}
It is straightforward to verify that $V_D(k)$ is monotonically nonincreasing in $k$. Consequently, the tardiness index can be equivalently characterized as $\rho_\tau(\mathcal{L})=\inf\{k\ge0: V_D(k)\le \tau\}$. This equivalence implies that once $V_D(k)$ can be efficiently evaluated, the tardiness index $\rho_\tau(\mathcal{L})$ can be computed via a one-dimensional bisection search over $k$. To tractably evaluate the Wasserstein distance between a continuous density $f$ and the discrete empirical measure $\widehat{P}_b$, we exploit the duality of semidiscrete optimal transport \citep{villani2021topics}. By introducing the bounded parameter set $\Lambda := \{ \lambda \in \mathbb{R}^m : e^\top \lambda = 0, \ \lambda_i \le \Delta_D:=\operatorname{diam}(D)\}$, the Wasserstein distance admits an exact finite-dimensional dual representation: 
\begin{align}
    W_1(f, \widehat{P}_b) = \max_{\lambda \in \Lambda} \int_D a_\lambda(x)f(x)dx, \ a_\lambda(x) := \min_{r} \{ \|x - x_r\| - \lambda_r \}.\label{eq:wass}
\end{align} 
\noindent Substituting this dual form into $V_D(k)$ gives:
\begin{align}
    V_D(k) = \sup_{f\in \mathcal{P}(D)}\inf_{\lambda \in \Lambda}\int_D \left(\beta\sqrt{n}\sqrt{f(x)} - k a_\lambda(x)f(x)\right) dx.
\end{align}

While this structural form yields a concave-convex geometry, the outer maximization remains infinite-dimensional over the Banach space $L^1(D)$. Proposition~\ref{prop:single_zone_dual} shows that the fixed-$k$ problem admits an exact finite-dimensional reformulation.

\begin{Proposition}
\label{prop:single_zone_dual}
For every fixed $k\ge 0$, the fixed-$k$ value function admits the finite-dimensional representation:
\begin{equation}
\label{eq:Vk_dual}
V_D(k)
=
\min_{\substack{\lambda\in\Lambda,\ \nu\in\mathbb R\\
ka_\lambda(x)+\nu>0,\ \forall x\in D}}
\left\{
\nu+\frac{\beta^2 n}{4}\int_D \frac{dx}{ka_\lambda(x)+\nu}
\right\}.
\end{equation}
Furthermore, letting $(\lambda^\star,\nu^\star)$ be a minimizer of \eqref{eq:Vk_dual}, the corresponding worst-case density is uniquely given by:
$$f_k^\star(x)=\frac{\beta^2n}{4\bigl(ka_{\lambda^\star}(x)+\nu^\star\bigr)^2},\quad x\in D.$$
\end{Proposition}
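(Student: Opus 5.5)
We prove the representation by exchanging the supremum over $f$ and the infimum over $\lambda$ in the saddle form of $V_D(k)$, and then solving the inner problem in $f$ in closed form with a Lagrange multiplier $\nu$ for the normalization $\int_D f=1$. Write $\Phi(f,\lambda):=\int_D\bigl(\beta\sqrt n\sqrt{f}-k a_\lambda f\bigr)dx$. For fixed $\lambda$, $\Phi$ is concave in $f$ because $\sqrt{\cdot}$ is concave and the second term is linear. For fixed $f$, $\Phi$ is convex in $\lambda$: $a_\lambda(x)=\min_r\{\|x-x_r\|-\lambda_r\}$ is concave in $\lambda$, and it enters with coefficient $-kf\le 0$. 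The set $\Lambda$ is compact and convex (a hyperplane section of $\{\lambda_i\le\Delta_D\}$, bounded below since $\lambda_i=-\sum_{j\ne i}\lambda_j\ge -(m-1)\Delta_D$). Moreover $\Phi(f,\cdot)$ is continuous, since $a_\lambda$ is Lipschitz in $\lambda$ uniformly in $x$ and $f\in L^1$. Sion's minimax theorem, which needs compactness only on the $\lambda$ side, therefore gives
\[
V_D(k)=\min_{\lambda\in\Lambda}\sup_{f\in\mathcal P(D)}\Phi(f,\lambda).
\]

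Next, fix $\lambda$ and write $g:=ka_\lambda$, which is continuous and bounded on $D$. Dualizing only the constraint $\int_D f=1$ gives, for every $\nu$,
\[
\sup_{f\in\mathcal P(D)}\Phi(f,\lambda)\le \nu+\int_D\sup_{t\ge 0}\bigl(\beta\sqrt n\sqrt t-(g(x)+\nu)t\bigr)dx .
\]
The pointwise supremum is $+\infty$ unless $g(x)+\nu>0$. When $g(x)+\nu>0$ it equals $\frac{\beta^2n}{4(g(x)+\nu)}$, attained at $t=\frac{\beta^2n}{4(g(x)+\nu)^2}$. For the reverse inequality, define $h(\nu):=\int_D \frac{\beta^2 n}{4(g+\nu)^2}dx$ on $\nu>-\min_D g$. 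It is continuous and strictly decreasing, tends to $0$ as $\nu\to\infty$, and tends to $+\infty$ as $\nu\downarrow-\min_D g$. The divergence holds because $g$ is Lipschitz (its slope is bounded by $k$) and $D$ has Lipschitz boundary, so near a minimizer the integral behaves like $\int \frac{dx}{(\epsilon+k|x-x_0|)^2}$, which diverges logarithmically in two dimensions as $\epsilon\to0$. So there is a unique $\bar\nu$ with $h(\bar\nu)=1$. The corresponding $f$ is feasible and attains the upper bound, which gives strong duality for the inner problem. Taking the minimum over $(\lambda,\nu)$ then yields \eqref{eq:Vk_dual}. The reformulation is convex because $1/(g+\nu)$ is convex in $(\lambda,\nu)$: it is a convex decreasing function composed with the concave map $(\lambda,\nu)\mapsto ka_\lambda+\nu$.

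For the worst-case density, take a minimizer $(\lambda^\star,\nu^\star)$ and set $f^\star_k$ as in the statement. First-order optimality in $\nu$ gives $1-\frac{\beta^2n}{4}\int (g+\nu)^{-2}=0$, so $f_k^\star\in\mathcal P(D)$. Combining this with the saddle-point structure and the attainment above shows that $f_k^\star$ attains $V_D(k)$. For uniqueness, the objective $f\mapsto \beta\sqrt n\int\sqrt f-kW_1(f,\widehat P_b)$ is strictly concave in $f$, as the sum of a strictly concave term and a concave term. Any two maximizers must therefore coincide almost everywhere.

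The main obstacle is justifying the minimax exchange and attainment on the non-compact, infinite-dimensional $f$-side. The specific concerns are continuity of $\Phi$ and whether the supremum is attained, since a maximizing sequence may concentrate mass. I would rely on Sion's theorem, which requires compactness only on $\Lambda$, and handle attainment directly through the explicit pointwise maximizer and the logarithmic divergence argument for $h$ described above. The case $k=0$ should be treated separately: there $g\equiv0$, the optimal density is uniform, and both sides equal $\beta\sqrt{n|D|}$.
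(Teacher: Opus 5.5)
Your derivation of the value identity \eqref{eq:Vk_dual} is sound, and it takes a different route from the paper. Sion's theorem on $\Lambda\times\mathcal P(D)$ removes the duality gap directly. It also needs upper semicontinuity in $f$, which holds because $|\int_D\sqrt f-\int_D\sqrt g|\le\sqrt{|D|\,\|f-g\|_1}$ and $a_\lambda$ is bounded. The paper, by contrast, only closes the gap at the end of Step~3 by exhibiting a saddle point.

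The genuine gap is the sentence claiming that $f_k^\star$ attains $V_D(k)$. Sion gives equality of values and a minimizing $\lambda^\star$. What you proved is attainment of the inner problem $\sup_f\Phi(f,\lambda^\star)$. But a maximizer of $\Phi(\cdot,\lambda^\star)$ for a dual-optimal $\lambda^\star$ need not be primal optimal. For example, take $\Phi(x,y)=xy$ on $[-1,1]^2$: $y^\star=0$ is optimal and every $x$ maximizes $\Phi(\cdot,0)$, yet only $x=0$ solves $\max_x\min_y xy$. Strict concavity would rescue the implication if a primal maximizer were known to exist, since it would form a saddle point with $\lambda^\star$ and hence equal the unique maximizer of $\Phi(\cdot,\lambda^\star)$. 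However, primal attainment is exactly the obstacle you flagged, and the explicit pointwise maximizer resolves it only for the inner problem. Your uniqueness argument likewise presupposes existence. Concretely, you must show $\int_D a_{\lambda^\star}f_k^\star\,dx=W_1(f_k^\star,\widehat P_b)$, i.e.\ that $\lambda^\star$ is an optimal potential for $f_k^\star$. The paper gets this from Danskin's theorem and the condition $0\in k\,\partial(-F)(\lambda^\star)+N_\Lambda(\lambda^\star)$.

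The step can be closed inside your framework. Let $y^\star:=ka_{\lambda^\star}+\nu^\star$. It is continuous and positive on the compact set $D$, so $y^\star\ge y_{\min}>0$. Complete the square using $\beta\sqrt n\sqrt u-yu=\frac{\beta^2n}{4y}-y\bigl(\sqrt u-\frac{\beta\sqrt n}{2y}\bigr)^2$ and $\sqrt{f_k^\star}=\beta\sqrt n/(2y^\star)$. For every $f\in\mathcal P(D)$ this gives
\[
\Phi(f,\lambda^\star)=\nu^\star+\frac{\beta^2n}{4}\int_D\frac{dx}{y^\star}-\int_D y^\star\bigl(\sqrt f-\sqrt{f_k^\star}\bigr)^2\,dx ,
\]
and the first two terms equal $V_D(k)$ by your Sion step. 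Since $\beta\sqrt n\int_D\sqrt f\,dx-kW_1(f,\widehat P_b)=\min_{\lambda\in\Lambda}\Phi(f,\lambda)\le\Phi(f,\lambda^\star)$, it follows that
\[
V_D(k)-\Bigl(\beta\sqrt n\int_D\sqrt f\,dx-kW_1(f,\widehat P_b)\Bigr)\ge y_{\min}\|\sqrt f-\sqrt{f_k^\star}\|_2^2\ge\frac{y_{\min}}{4}\,\|f-f_k^\star\|_1^2 .
\]
Hence every primal maximizing sequence converges to $f_k^\star$ in $L^1$. The primal objective is $L^1$-continuous, because $W_1$ is Lipschitz in total variation on the bounded set $D$. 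Therefore $f_k^\star$ attains $V_D(k)$. The same inequality gives uniqueness, and it shows that every minimizer of \eqref{eq:Vk_dual} produces the same density.
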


\textit{Proof.} The proof is organized in three steps.

\noindent\textit{Step 1: Weak duality.}
For any fixed $\lambda\in\Lambda$, define
$$H_k(\lambda):=\sup_{f\in\mathcal P(D)}\int_D\Bigl(\beta\sqrt n\sqrt{f(x)}-ka_\lambda(x)f(x)\Bigr)\,dx.$$ 
By weak duality, $V_D(k)\le H_k(\lambda)$ for every $\lambda\in\Lambda$, and hence
$V_D(k)\le \inf_{\lambda\in\Lambda}H_k(\lambda)$. Note that the set $\Lambda$ is compact, since $e^\top\lambda=0$ and $\lambda_i\le \Delta_D$ imply $\lambda_i\ge -(m-1)\Delta_D$ for each $i$.

\medskip
\noindent\textit{Step 2: Analytical evaluation of $H_k(\lambda)$.}
Fix $\lambda\in\Lambda$. Dualizing the constraint $\int_D f(x)\,dx=1$ with multiplier $\nu\in\mathbb R$ gives
$$H_k(\lambda)\le\inf_{\nu\in\mathbb R}\sup_{f\in L_+^1(D)}\left\{
\nu+\int_D\Bigl[\beta\sqrt n\sqrt{f(x)}-\bigl(ka_\lambda(x)+\nu\bigr)f(x)\Bigr]dx
\right\}.$$ 
If $ka_\lambda(x)+\nu\le 0$ at some $x$, the inner supremum is $+\infty$ by increasing $f$ on a neighborhood of that point. Hence finiteness requires $ka_\lambda(x)+\nu>0,\ \forall x\in D$. Under this condition, the maximization separates pointwise. For $y:=ka_\lambda(x)+\nu>0$ and $u:=f(x)\ge 0$, the scalar problem
$$\sup_{u\ge 0}\{\beta\sqrt n\sqrt u-yu\}$$
is strictly concave and has the unique maximizer $u^\star=\beta^2n/(4y^2)$, with optimal value $\beta^2n/(4y)$. Therefore,
$$H_k(\lambda)
\le
\inf_{\substack{\nu\in\mathbb R\\ ka_\lambda(x)+\nu>0,\ \forall x\in D}}
J_\lambda(\nu),
\qquad
J_\lambda(\nu):=\nu+\frac{\beta^2n}{4}\int_D\frac{dx}{ka_\lambda(x)+\nu},$$
and the associated pointwise maximizer is
$$f_{\lambda,\nu}^\star(x)=\frac{\beta^2n}{4\bigl(ka_\lambda(x)+\nu\bigr)^2}.$$ 
By strict convexity and boundary behavior, \(J_\lambda\) admits a unique minimizer $\nu_\lambda^\star$. Indeed, \(J_\lambda''(\nu)>0\), \(J_\lambda'(\nu)\to1\) as \(\nu\to\infty\), and \(J_\lambda'(\nu)\to-\infty\) as \(\nu\downarrow -k\min_x a_\lambda(x)\), where the last limit follows from the logarithmic divergence of \(\int_D(ka_\lambda(x)+\nu)^{-2}dx\) near a minimizer of \(a_\lambda\). The first-order condition $J_\lambda'(\nu_\lambda^\star)=0$ ensures exactly that $\int_D f^\star_{\lambda,\nu^\star_\lambda}(x)dx=1$. Thus, $f^\star_{\lambda,\nu^\star_\lambda} \in \mathcal P(D)$ and the upper bound is attained, yielding $H_k(\lambda) = J_\lambda(\nu_\lambda^\star)$.

\medskip
\noindent
\textit{Step 3: Attainment and exactness.}
By standard continuity arguments of parametric optimization and the compactness of $\Lambda$, the function $\lambda\mapsto H_k(\lambda)$ is continuous and attains its minimum at some $\lambda^\star\in\Lambda$. Let $\nu^\star:=\nu_{\lambda^\star}^\star$ and define $f_k^\star := f^\star_{\lambda^\star,\nu^\star}$. To establish exactness, assume $k>0$ (the case $k=0$ is trivial). For
\[
F(\lambda):=\int_D a_\lambda(x)f_k^\star(x)\,dx,
\]
the maximizer in the definition of $H_k(\lambda^\star)$ is unique, namely $f_k^\star$, by the strict concavity of the inner problem in $f$. Moreover, $H_k$ is convex on $\Lambda$ as the pointwise supremum of convex functions. Hence Danskin's theorem gives
\[
\partial H_k(\lambda^\star)=k\,\partial (-F)(\lambda^\star).
\]
Since $\lambda^\star$ minimizes $H_k$ over $\Lambda$, $0\in \partial H_k(\lambda^\star)+N_\Lambda(\lambda^\star)$, where $N_\Lambda(\lambda^\star)$ denotes the normal cone of $\Lambda$ at $\lambda^\star$. Therefore, we have $0\in k\,\partial (-F)(\lambda^\star)+N_\Lambda(\lambda^\star)$. Because $k>0$ and $N_\Lambda(\lambda^\star)$ is a cone, this is equivalent to $0\in \partial (-F)(\lambda^\star)+N_\Lambda(\lambda^\star)$, which is exactly the first-order optimality condition minimizing the convex function $-F$ over $\Lambda$, or equivalently, for maximizing $F$ over $\Lambda$. By the semidiscrete Kantorovich dual representation, $W_1(f_k^\star,\widehat P_b)=\max_{\lambda\in\Lambda}F(\lambda)=F(\lambda^\star)$. Consequently,
\[
V_D(k)\le H_k(\lambda^\star)
=\beta\sqrt n\int_D\sqrt{f_k^\star(x)}\,dx-k W_1(f_k^\star,\widehat P_b)
\le V_D(k),
\]
where the last inequality follows from the primal feasibility of $f_k^\star$. This establishes the representation. Finally, the strict concavity of the primal objective on $\mathcal P(D)$ implies that $f_k^\star$ is the unique worst-case density.\halmos

We note that the reformulation in Proposition~\ref{prop:single_zone_dual} can be viewed as a penalized counterpart of Wasserstein distributionally robust TSP studied by \citet{carlsson2018wasserstein}: instead of fixing a Wasserstein radius and maximizing the routing time over all distributions within that radius, we penalize the Wasserstein deviation by $k$ and arrive at a similar worst-case structure but with a different primal-dual argument. 
Although Proposition~\ref{prop:single_zone_dual} reduces the fixed-$k$ evaluation to finite dimensions in $(\lambda,\nu)$, the objective still contains an intractable continuous integral over $D$. To achieve computational tractability, we approximate this spatial integration using numerical quadrature. Specifically, this discretization not only makes the objective evaluable but also enables us to transform the problem into a second-order cone program (SOCP), all while preserving the exact semidiscrete optimal transport structure. Let $\{(z_c,w_c)\}_{c=1}^C$ be a quadrature rule on $D$ with strictly positive weights, and define the distance matrix elements $d_{ci}:=\|z_c-x_i\|$ for $c=1,\ldots,C$ and $i=1,\ldots,m$. To guarantee strict positivity of the denominator and ensure numerical stability, we introduce a tolerance $\varepsilon>0$. The discretized problem then admits the following SOCP reformulation.

\begin{Proposition}
\label{prop:socp_fixed_k}
For any fixed $k>0$, quadrature rule $\{(z_c,w_c)\}_{c=1}^C$, and tolerance $\varepsilon>0$, evaluating the discretized fixed-$k$ value function is equivalent to the following SOCP:
\begin{subequations}
\begin{align}
\min_{\lambda,\nu,s,u,t,q}\quad 
& \nu+\frac{\beta^2n}{4}\sum_{c=1}^C w_c q_c
\label{eq:socp_obj}
\\
\text{s.t.}\quad
& u_c\le d_{ci}-\lambda_i,
\qquad c=1,\ldots,C,\ \ i=1,\ldots,m,
\label{eq:socp_u}
\\
& t_c=ku_c+\nu,
\qquad c=1,\ldots,C,
\label{eq:socp_t}
\\
& \left(\frac{q_c}{2},\,t_c,\,1\right)\in\mathcal K_r,
\qquad c=1,\ldots,C,
\label{eq:socp_rot}
\\
& s\ge \lambda_i,
\qquad i=1,\ldots,m,
\label{eq:socp_s}
\\
& \nu\ge ks+\varepsilon,
\label{eq:socp_pos}
\\
& e^\top\lambda=0,\qquad \lambda_i\le \Delta_D,
\qquad i=1,\ldots,m,
\label{eq:socp_lambda}
\end{align}
\end{subequations}
where $\mathcal K_r:=\{(x,y,z)\in\mathbb R^3:x\ge 0,\ y\ge 0,\ 2xy\ge z^2\}$ is the rotated second-order cone.
\end{Proposition}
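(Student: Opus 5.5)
The plan is to start from the finite-dimensional representation of Proposition~\ref{prop:single_zone_dual}, replace the integral by the quadrature sum, and then lift each nonlinear piece into conic form. Explicitly, I take the discretized fixed-$k$ value function to be
\[
\min_{\lambda\in\Lambda,\ \nu}\Bigl\{\nu+\frac{\beta^2n}{4}\sum_{c=1}^C\frac{w_c}{k a_\lambda(z_c)+\nu}\ :\ k a_\lambda(z_c)+\nu\ge \varepsilon\ \ \forall c\Bigr\}.
\]
Here the strict positivity condition on $D$ has been replaced by an $\varepsilon$-margin. Below I indicate how the SOCP's positivity constraint \eqref{eq:socp_pos} encodes this, since $a_\lambda(z_c)=\min_i\{d_{ci}-\lambda_i\}\ge -\max_i\lambda_i$. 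I will show that the two problems have the same optimal value. I will do this by mapping feasible points in both directions without increasing the objective.

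\textbf{Direction discretized $\Rightarrow$ SOCP.} Given feasible $(\lambda,\nu)$, set $u_c:=a_\lambda(z_c)$, which satisfies \eqref{eq:socp_u}. Set $t_c:=ku_c+\nu>0$ and $q_c:=1/t_c$. Then $2\cdot\frac{q_c}{2}\cdot t_c=1$, so \eqref{eq:socp_rot} holds. Take $s:=\max_i\lambda_i$, which satisfies \eqref{eq:socp_s}. The SOCP objective then coincides with the discretized one.

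\textbf{Direction SOCP $\Rightarrow$ discretized.} Given a feasible SOCP point, the key observations are two monotonicity facts. First, the cone constraint gives $t_c\ge 0$ and $q_ct_c\ge1$, so $t_c>0$ and $q_c\ge 1/t_c$. Second, $u_c\le a_\lambda(z_c)$ by \eqref{eq:socp_u}, and hence $t_c\le k a_\lambda(z_c)+\nu$ because $k>0$. Therefore
\[
q_c\ge \frac{1}{t_c}\ge \frac{1}{k a_\lambda(z_c)+\nu},
\]
and since $w_c>0$ the SOCP objective dominates the discretized objective at $(\lambda,\nu)$. Feasibility of $(\lambda,\nu)$ in the discretized problem follows from $\lambda\in\Lambda$ by \eqref{eq:socp_lambda}. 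Positivity follows from
\[
k a_\lambda(z_c)+\nu\ \ge\ -k\max_i\lambda_i+\nu\ \ge\ -ks+\nu\ \ge\ \varepsilon
\]
by \eqref{eq:socp_s}--\eqref{eq:socp_pos}. Combining the two directions gives equal optimal values, and tightness holds at optimality with $u_c=a_\lambda(z_c)$ and $q_c=1/t_c$.

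\textbf{Main obstacle.} The main point needing care is pinning down precisely which discretized problem is meant, that is, how the tolerance $\varepsilon$ enters. The SOCP imposes the sufficient condition $\nu-k\max_i\lambda_i\ge\varepsilon$ rather than $\min_c(k a_\lambda(z_c)+\nu)\ge\varepsilon$. Since $d_{ci}\ge0$, the former implies the latter but not conversely. I would therefore define the discretized problem with exactly this uniform margin, which is natural because $a_\lambda(x_i)= -\lambda_i$ at data points. This makes the positivity set identical in both formulations, and the equivalence argument above then goes through verbatim. Remaining routine checks are convexity (all constraints are linear or rotated-cone constraints) and that the objective is linear, so the problem is a genuine SOCP.
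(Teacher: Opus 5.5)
Your proposal is correct and follows essentially the paper's route: the rotated-cone constraint gives $q_c\ge 1/t_c$, the monotonicity of $u\mapsto 1/(ku+\nu)$ forces $u_c=a_\lambda(z_c)$, and the margin is handled via $\min_{x\in D}a_\lambda(x)=-\max_i\lambda_i$. Your uniform margin $\nu-k\max_i\lambda_i\ge\varepsilon$ is therefore exactly the paper's requirement that $ka_\lambda(x)+\nu\ge\varepsilon$ hold on all of $D$ (not just at the nodes $z_c$), and your two-sided feasibility mapping simply makes explicit the paper's ``constraints bind at optimality'' step; one small correction to a side remark is that $a_\lambda(x_i)=-\lambda_i$ need not hold at every atom --- in general only $a_\lambda(x_i)\le-\lambda_i$, with equality at any index $i$ maximizing $\lambda_i$ --- but that is all the identity requires.
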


\textit{Proof. }
For any feasible solution, the objective minimizes $q_c$. The rotated cone constraints \eqref{eq:socp_rot} enforce $q_c \ge 1/t_c$, ensuring $q_c = 1/t_c$ at optimality. Because the map $u \mapsto 1/(ku+\nu)$ is strictly decreasing for $ku+\nu > 0$, the minimization pushes $u_c$ to its upper bound. Thus, constraints \eqref{eq:socp_u} bind at $u_c = \min_i \{d_{ci}-\lambda_i\} = a_\lambda(z_c)$. Substituting this into \eqref{eq:socp_t} gives $t_c = ka_\lambda(z_c)+\nu$, which yields exactly the quadrature-discretized objective. To enforce the strict positivity condition $ka_\lambda(x)+\nu \ge \varepsilon$ globally over $D$, we exploit the identity $\min_{x\in D} a_\lambda(x) = -\max_i \lambda_i$. Constraint \eqref{eq:socp_s} defines $s \ge \max_i \lambda_i$, allowing \eqref{eq:socp_pos} to enforce $\nu - k \max_i \lambda_i \ge \varepsilon$. This guarantees the required bounded positivity for all $x \in D$, confirming the exact equivalence. \halmos

Propositions~\ref{prop:single_zone_dual} and \ref{prop:socp_fixed_k} offer the computational foundation for the TSP tardiness index. They show that the infinite-dimensional fixed-\(k\) problem can be reduced to a finite-dimensional reformulation and then transformed into an SOCP through spatial quadrature. Consequently, \(\rho_\tau(L)\) can be efficiently computed by solving the SOCP within a one-dimensional bisection search over \(k\). It is also worth noting that our SOCP reformulation can be applied to solve the distributionally robust formulation in \citet{carlsson2018wasserstein}, yielding significant computational improvements over the cutting-plane algorithm proposed there. Detailed comparisons are presented in~\ref{EC:computation}. The next section derives analytical bounds and establishes scaling laws for the TSP tardiness index that shed light on the target satisfiability of delivery systems.

\section{Scaling of TSP Tardiness: Bounds and Analysis}
\label{sec:scaling}
To study how the TSP tardiness index changes with the target level and the geometry of the service region, we first reduce its evaluation to a one-dimensional variational problem. The key step is to separate the Wasserstein deviation level from the BHH routing-time expression. For $0<t\le \Delta_D:=\operatorname{diam}(D)$, define the radius-constrained envelope function 
\begin{equation}
\label{eq:radius_envelope}
\mathcal E_D(t):=\sup\left\{\int_D \sqrt{f(x)}\,dx: f\in\mathcal P(D),\ W_1(f,\widehat P_b)\le t\right\}.
\end{equation}
Specifically,  $\mathcal E_D(t)$ is a scalar-valued function of the Wasserstein radius \(t\), capturing the largest BHH routing-time factor attainable within distance \(t\) from the empirical distribution. Once this envelope function is characterized, both the fixed-\(k\) value function $V_D(k)$ and the tardiness index $\rho_\tau(\mathcal{L})$ admit one-dimensional variational representations, as shown in the following lemma.

\begin{Lemma}
\label{lem:Vk_kappa_radius}
For every $k\ge 0$,
\begin{equation}
\label{eq:Vk_radius}
V_D(k)=\sup_{0<t\le \Delta_D}\left\{\beta\sqrt n\,\mathcal E_D(t)-kt\right\}.
\end{equation}
Consequently, for every target $\tau>0$,
\begin{equation}
\label{eq:kappa_radius}
\rho_\tau(\mathcal{L})=\sup_{0<t\le \Delta_D}\frac{\bigl(\beta\sqrt n\,\mathcal E_D(t)-\tau\bigr)^+}{t}.
\end{equation}
\end{Lemma}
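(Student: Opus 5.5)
The plan is to prove both identities directly from the definitions by grouping the candidate densities $f\in\mathcal P(D)$ according to their Wasserstein distance $t=W_1(f,\widehat P_b)$. Throughout, write $\mathcal L(f)=\beta\sqrt n\int_D\sqrt{f(x)}\,dx$.

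\textbf{Range of $t$ and finiteness of the envelope.} I will first check that $t$ always lies in $(0,\Delta_D]$.
\begin{itemize}
\item Any coupling of $P_f$ and $\widehat P_b$ is supported on $D\times D$, and every $x_r\in D$. Hence $W_1(f,\widehat P_b)\le \Delta_D$.
\item $P_f$ is absolutely continuous, so it is not the atomic measure $\widehat P_b$. Since $W_1$ is a metric, $W_1(f,\widehat P_b)>0$.
\end{itemize}
So $t\in(0,\Delta_D]$ for every admissible $f$. This is why the suprema in \eqref{eq:Vk_radius} and \eqref{eq:kappa_radius} run over $0<t\le\Delta_D$.

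I will also note that $\mathcal E_D(t)\le\sqrt{|D|}$ by Cauchy--Schwarz, so every quantity involved is finite. The feasible sets defining $\mathcal E_D(t)$ are nonempty for $t=\Delta_D$, and more generally whenever some density lies within distance $t$. If the set is empty, I adopt the convention $\sup\emptyset=-\infty$, and such $t$ contribute nothing to either supremum.

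\textbf{Proof of \eqref{eq:Vk_radius}.} I will show two inequalities.
\begin{itemize}
\item ($\ge$) Fix $t$ and any $f$ with $W_1(f,\widehat P_b)\le t$. Since $k\ge0$,
$$V_D(k)\ \ge\ \mathcal L(f)-kW_1(f,\widehat P_b)\ \ge\ \mathcal L(f)-kt.$$
Taking the supremum over such $f$, and then over $t$, gives $V_D(k)\ge\sup_t\{\beta\sqrt n\,\mathcal E_D(t)-kt\}$.
\item ($\le$) Given any $f$, set $t:=W_1(f,\widehat P_b)\in(0,\Delta_D]$. Then $f$ is feasible for $\mathcal E_D(t)$, so
$$\mathcal L(f)-kt\ \le\ \beta\sqrt n\,\mathcal E_D(t)-kt.$$
Taking the supremum over $f$ gives the reverse inequality.
\end{itemize}

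\textbf{Proof of \eqref{eq:kappa_radius}.} By its definition, $\rho_\tau(\mathcal L)$ is the smallest $k\ge0$ dominating $S:=\sup_f (\mathcal L(f)-\tau)^+/W_1(f,\widehat P_b)$. Hence $\rho_\tau(\mathcal L)=S$, which is well defined because the denominator is positive. I will run the same two-sided argument on $S$.
\begin{itemize}
\item ($\ge$) For $f$ with $W_1(f,\widehat P_b)\le t$,
$$\frac{(\mathcal L(f)-\tau)^+}{W_1(f,\widehat P_b)}\ \ge\ \frac{(\mathcal L(f)-\tau)^+}{t}.$$
The map $y\mapsto (y-\tau)^+/t$ is nondecreasing and continuous, so taking the supremum over $f$ yields $S\ge(\beta\sqrt n\,\mathcal E_D(t)-\tau)^+/t$.
\item ($\le$) For each $f$, choosing $t=W_1(f,\widehat P_b)$ bounds its ratio by $(\beta\sqrt n\,\mathcal E_D(t)-\tau)^+/t$.
\end{itemize}

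As a consistency check, the same formula follows from $\rho_\tau(\mathcal L)=\inf\{k:V_D(k)\le\tau\}$ and \eqref{eq:Vk_radius}: $V_D(k)\le\tau$ holds exactly when $k\ge(\beta\sqrt n\,\mathcal E_D(t)-\tau)/t$ for all $t$.

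\textbf{Main obstacle.} The only delicate point is interchanging the supremum with the nondecreasing map $(\cdot-\tau)^+$ and with $1/t$. This is justified by monotonicity and continuity, as used above. The strict positivity of $W_1$ on densities, which rules out $t=0$, is the other place where care is needed.
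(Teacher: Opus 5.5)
Your proof is correct and, for \eqref{eq:Vk_radius}, follows the paper's argument: you group densities by $t=W_1(f,\widehat P_b)\in(0,\Delta_D]$ and prove the two inequalities, taking the supremum over feasible $f$ directly where the paper uses $\varepsilon$-optimal densities, which is equivalent. For \eqref{eq:kappa_radius}, the paper deduces the formula from \eqref{eq:Vk_radius} via $\rho_\tau(\mathcal{L})=\inf\{k\ge 0: V_D(k)\le\tau\}$ and the equivalence you list as your consistency check, whereas your main argument applies the same grouping by $t$ directly to the ratio definition of $\rho_\tau$ --- both routes are valid, yours avoids relying on that level-set characterization, and the paper's makes the ``consequently'' literal.
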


\textit{Proof.}
Let $k\ge 0$. For any $f\in\mathcal P(D)$, define $t_f:=W_1(f,\widehat P_b)\in(0,\Delta_D]$. By the definition of $\mathcal E_D$, we have $\int_D \sqrt{f(x)} dx\le \mathcal E_D(t_f)$, and hence
\[
\beta\sqrt n\int_D\sqrt{f(x)}\,dx-kW_1(f,\widehat P_b)
\le \beta\sqrt n\,\mathcal E_D(t_f)-kt_f
\le \sup_{0<t\le \Delta_D}\{\beta\sqrt n\,\mathcal E_D(t)-kt\}.
\]
Taking the supremum over $f\in\mathcal P(D)$ gives
\[
V_D(k)\le \sup_{0<t\le \Delta_D}\{\beta\sqrt n\,\mathcal E_D(t)-kt\}.
\]
Conversely, for any $t\in(0,\Delta_D]$ and $\varepsilon>0$, the definition of $\mathcal E_D(t)$ yields some $f_{t,\varepsilon}\in\mathcal P(D)$ such that
\[
W_1(f_{t,\varepsilon},\widehat P_b)\le t,
\qquad
\int_D\sqrt{f_{t,\varepsilon}(x)}\,dx\ge \mathcal E_D(t)-\varepsilon.
\]
Therefore,
\[
V_D(k)\ge\beta\sqrt n\int_D\sqrt{f_{t,\varepsilon}(x)}\,dx-kW_1(f_{t,\varepsilon},\widehat P_b)\ge\beta\sqrt n\bigl(\mathcal E_D(t)-\varepsilon\bigr)-kt.
\]
Letting $\varepsilon\downarrow 0$ and then taking the supremum over $t$ establishes \eqref{eq:Vk_radius}. The identity \eqref{eq:kappa_radius} then follows directly from \eqref{eq:Vk_radius}, because the condition $V_D(k)\le \tau$ is equivalent to
\[
k\ge \frac{\beta\sqrt n\,\mathcal E_D(t)-\tau}{t}
\qquad \forall\, t\in(0,\Delta_D],
\]
or, equivalently,
\[
k\ge \frac{\bigl(\beta\sqrt n\,\mathcal E_D(t)-\tau\bigr)^+}{t}
\qquad \forall\, t\in(0,\Delta_D].
\]
Taking the smallest admissible $k$ yields \eqref{eq:kappa_radius}.  \halmos

Lemma~\ref{lem:Vk_kappa_radius} is the key reduction for the scaling analysis. It shows that the TSP tardiness index is fully determined by the growth of the radius-constrained envelope \(\mathcal E_D(t)\). Hence, the rest of the analysis can focus on bounding \(\mathcal E_D(t)\), and these bounds can translate directly into upper and lower bounds for \(\rho_\tau(\mathcal{L})\).

\subsection{Upper Bound} 

We first derive a distribution-free upper bound on $\mathcal{E}_D(t)$ using only the geometry of the empirical support. Let $X := \{x_1, \ldots, x_m\}$ denote the support of the empirical measure $\widehat P_b$, and define the distance-to-support function $d_X(x) := \min_{1\le r\le m}\|x-x_r\|$ for $x\in D$. The next proposition links the BHH routing-time factor to the Wasserstein distance from the empirical distribution. 

\begin{Proposition}
\label{prop:radius_upper}For every density $f\in\mathcal P(D)$,
\begin{equation}
\label{eq:pointwise_upper}
\int_D \sqrt{f(x)} dx
\le \sqrt{I_X\,W_1(f,\widehat P_b)},
\qquad  \text{where} \quad
I_X:=\int_D\frac{dx}{d_X(x)}.
\end{equation}
Moreover, the geometric factor $I_X$ satisfies\begin{equation}
\label{eq:IX_upper}
I_X\le 2\sqrt{\pi |D|m}.
\end{equation}
Consequently, the radius-constrained envelope is bounded by
\begin{equation}
\label{eq:radius_upper_envelope}
\mathcal E_D(t)\le\sqrt{2\sqrt{\pi |D|m}t}=\sqrt{2}(\pi |D|m)^{1/4}\sqrt t,
\qquad 0<t\le \Delta_D.
\end{equation}
\end{Proposition}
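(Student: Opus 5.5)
The first inequality \eqref{eq:pointwise_upper} will follow from Cauchy--Schwarz combined with a lower bound on $W_1$. Write $\sqrt{f}=\sqrt{f\,d_X}\cdot d_X^{-1/2}$. Then
\[
\int_D\sqrt{f(x)}\,dx\le\Bigl(\int_D f(x)d_X(x)\,dx\Bigr)^{1/2}\Bigl(\int_D\frac{dx}{d_X(x)}\Bigr)^{1/2}.
\]
It remains to show $\int_D f\,d_X\le W_1(f,\widehat P_b)$. Every coupling $\pi$ of $P_f$ and $\widehat P_b$ moves mass from $x$ to some support point $x_r\in X$, so the cost $\|x-x_r\|$ is at least $d_X(x)$. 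Integrating against $\pi$ gives $\int\|x-y\|\,d\pi\ge\int_D d_X(x)f(x)\,dx$, and taking the infimum over couplings yields the claim. Alternatively, one can use \eqref{eq:wass} with $\lambda=0\in\Lambda$, which gives $a_0=d_X$ and hence $W_1\ge\int a_0 f$ directly. If $I_X=\infty$ the bound is trivial. The bound \eqref{eq:IX_upper} will show that $I_X$ is in fact finite, since $1/\|x-x_r\|$ is integrable in the plane.

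The main step is the geometric bound \eqref{eq:IX_upper}, and I expect it to be the main obstacle. I would prove it with a rearrangement argument. Set $g(s):=|\{x\in D:d_X(x)\le s\}|$. Since $\{d_X\le s\}\subseteq\bigcup_r B(x_r,s)$ and also lies in $D$, we have $g(s)\le\min\{|D|,\pi m s^2\}$. By the layer-cake formula,
\[
I_X=\int_0^\infty |\{x\in D: 1/d_X(x)>u\}|\,du=\int_0^\infty g(1/u)\,du.
\]
Substituting $s=1/u$ gives $I_X=\int_0^\infty g(s)s^{-2}\,ds$. (Strictly, $\{1/d_X>u\}=\{d_X<1/u\}$, but the boundary sets $\{d_X=s\}$ have measure zero, so this does not matter.) Inserting the minimum, let $s_0:=\sqrt{|D|/(\pi m)}$. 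Then
\[
I_X\le\int_0^{s_0}\pi m\,ds+\int_{s_0}^\infty\frac{|D|}{s^2}\,ds=\pi m s_0+\frac{|D|}{s_0}=2\sqrt{\pi|D|m}.
\]
This is exactly the claimed constant. Intuitively, the extremal configuration is $m$ disjoint disks of total area $|D|$. The care needed is only in justifying the layer-cake identity and the choice of the crossover $s_0$.

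Finally, \eqref{eq:radius_upper_envelope} follows by combining the two bounds. For any $f\in\mathcal P(D)$ with $W_1(f,\widehat P_b)\le t$,
\[
\int_D\sqrt f\le\sqrt{I_X t}\le\sqrt{2\sqrt{\pi|D|m}\,t}.
\]
Taking the supremum over such $f$ in \eqref{eq:radius_envelope} gives the stated envelope bound, and the second equality there is simple algebra.
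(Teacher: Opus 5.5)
Your proof is correct, and its first and last parts match the paper. Where it differs is the bound on $I_X$.

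For the first part, the paper also obtains $\int_D d_X f\,dx\le W_1(f,\widehat P_b)$, using Kantorovich--Rubinstein duality with the $1$-Lipschitz test function $d_X$, which vanishes on $X$. Your primal coupling argument, or the choice $\lambda=0$ in \eqref{eq:wass}, is an equivalent and slightly more elementary justification. The paper then applies Cauchy--Schwarz and takes the supremum over the Wasserstein ball, exactly as you do.

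For $I_X$, the paper proceeds in three steps:
\begin{enumerate}[(i)]
\item It takes a measurable Voronoi partition $\{C_r\}$ of $D$ with cell areas $a_r$.
\item It uses symmetric rearrangement (the bathtub principle) to replace each cell by a centered disk of equal area, giving $\int_{C_r}\|x-x_r\|^{-1}dx\le 2\sqrt{\pi a_r}$.
\item It applies Cauchy--Schwarz again to get $\sum_r\sqrt{a_r}\le\sqrt{m|D|}$.
\end{enumerate}
You instead write $I_X=\int_0^\infty|\{x\in D:d_X(x)<s\}|\,s^{-2}\,ds$ and use only $|\{d_X\le s\}|\le\min\{|D|,\pi m s^2\}$, which is just subadditivity of Lebesgue measure.

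Your route needs no Voronoi partition, no rearrangement inequality, and no second Cauchy--Schwarz. It also makes the extremal configuration explicit ($m$ disjoint disks of total area $|D|$). The crossover scale $s_0=\sqrt{|D|/(\pi m)}$ is the same planar scale $|D|^{1/2}m^{-1/2}$ that reappears in Assumption~\ref{ass:interior_dispersion}.

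The paper's route gives, in exchange, the intermediate configuration-dependent bound $I_X\le 2\sqrt{\pi}\sum_r\sqrt{a_r}$. This is strictly smaller than $2\sqrt{\pi|D|m}$ whenever the Voronoi cells have unequal areas. You can recover it by running your layer-cake bound cell by cell with $|\{d_X\le s\}\cap C_r|\le\min\{a_r,\pi s^2\}$, which integrates to exactly $2\sqrt{\pi a_r}$.

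A minor point: you do not need the claim that the level sets $\{d_X=s\}$ are null. The inequality $|\{d_X<s\}|\le g(s)$ already gives what you need.
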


\textit{Proof.}
The proof proceeds in three steps: we first evaluate the Wasserstein distance using the distance-to-support function, combine the ensuing first-moment bound with the Cauchy–Schwarz inequality, and finally bound the geometric integral via a Voronoi-cell analysis.

\medskip
\noindent\textit{Step 1: A transport bound obtained from the distance-to-support function.}
Observe that $d_X$ is $1$-Lipschitz on $D$, and $d_X(x_r)=0$ for every empirical atom $x_r \in X$. Therefore, the Kantorovich–Rubinstein dual representation of the Wasserstein distance implies
\[
\int_D d_X(x)f(x)\,dx-\int_D d_X(x)\,d\widehat P_b(x)
\le W_1(f,\widehat P_b).
\]
Since $\widehat P_b$ is supported on $X$, the second integral vanishes:
\[
\int_D d_X(x)\,d\widehat P_b(x)=\frac1m\sum_{r=1}^m d_X(x_r)=0.
\]
Thus, we obtain the first-moment estimate
\begin{equation}
\label{eq:dX_transport}
\int_D d_X(x)f(x)\,dx\le W_1(f,\widehat P_b).
\end{equation}

\medskip
\noindent\textit{Step 2: Deriving the routing time bound from the first-moment estimate.}
Since $X$ is a finite set, it is of Lebesgue measure zero. Hence, almost everywhere on $D$ (specifically, on $D\setminus X$), we can rewrite the integrand as
\[
\sqrt{f(x)}=\sqrt{d_X(x)f(x)}\,d_X(x)^{-1/2}.
\]
Combining this with \eqref{eq:dX_transport} immediately yields
\[
\int_D \sqrt{f(x)}\,dx
=
\int_{D\setminus X} \sqrt{d_X(x)f(x)}\,\frac{dx}{\sqrt{d_X(x)}}.
\]
An application of the Cauchy–Schwarz inequality, coupled with the fact that $I_X < \infty$ (as will be shown in Step 3), gives
\[
\left(\int_D \sqrt{f(x)}\,dx\right)^2
\le
\left(\int_D d_X(x)f(x)\,dx\right)
\left(\int_D \frac{dx}{d_X(x)}\right).
\]
Combining this with \eqref{eq:dX_transport} yields
\[
\left(\int_D \sqrt{f(x)}\,dx\right)^2
\le I_X\,W_1(f,\widehat P_b),
\]
which establishes \eqref{eq:pointwise_upper}.

\medskip
\noindent\textit{Step 3: Bounding the geometric factor $I_X$.}
Let $\{C_r\}_{r=1}^m$ be a measurable Voronoi partition of $D$ generated by $X$. By the definition of this partition, the minimum distance simplifies locally to $d_X(x) = \|x-x_r\|$ for almost every $x\in C_r$. This allows us to decompose the integral as
\[
I_X=\sum_{r=1}^m\int_{C_r}\frac{dx}{\|x-x_r\|}.
\]
For a fixed area $a_r$, the kernel $u \mapsto \|u\|^{-1}$ is radially symmetric and strictly decreasing. By symmetric rearrangement, the integral over the cell $C_r$ is maximized when the domain is replaced by a centered disk of equivalent area. Consequently,
\[
\int_{C_r}\frac{dx}{\|x-x_r\|}\le
\int_{B(0,\sqrt{a_r/\pi})}\frac{du}{\|u\|}=2\sqrt{\pi a_r}.
\]
Aggregating this bound across all $m$ cells and invoking the Cauchy–Schwarz inequality, we obtain
\[
I_X\le2\sqrt\pi\sum_{r=1}^m \sqrt{a_r}\le2\sqrt\pi\,\sqrt{m\sum_{r=1}^m a_r}
=2\sqrt{\pi |D|m},
\]
which establishes \eqref{eq:IX_upper}, where $|D|$ is the area of the zone. Finally, taking the supremum of \eqref{eq:pointwise_upper} over the Wasserstein ball $\{f \in \mathcal{P}(D) : W_1(f, \widehat P_b) \le t\}$ yields the envelope bound \eqref{eq:radius_upper_envelope}. \halmos

Proposition~\ref{prop:radius_upper} links the envelope function with the basic geometric features. It implies that, regardless of the empirical configuration, the BHH routing-time factor cannot grow faster than order \(|D|^{1/4}m^{1/4}\sqrt t\) within a Wasserstein radius \(t\). Combining this geometric bound with Lemma~\ref{lem:Vk_kappa_radius} gives a closed-form upper bound for the TSP tardiness index.

\begin{Corollary}
\label{cor:kappa_upper}
Letting $A_{\mathrm{up}}:=\beta\sqrt{2n}(\pi |D|m)^{1/4}$, the tardiness index satisfies
\begin{equation}
\label{eq:kappa_upper_scalar}
\rho_\tau(\mathcal{L})
\le
\sup_{0<t\le \Delta_D}
\frac{\bigl(A_{\mathrm{up}}\sqrt t-\tau\bigr)^+}{t}.
\end{equation}
Moreover, the supremum evaluates to the piecewise function
\begin{equation}
\label{eq:kappa_upper_piecewise}
\sup_{0<t\le \Delta_D}
\frac{\bigl(A_{\mathrm{up}}\sqrt t-\tau\bigr)^+}{t}
=
\begin{cases}
\dfrac{A_{\mathrm{up}}^2}{4\tau}, & 0<\tau\le \dfrac{A_{\mathrm{up}}\sqrt{\Delta_D}}{2},\\
\dfrac{A_{\mathrm{up}}\sqrt{\Delta_D}-\tau}{\Delta_D}, & \dfrac{A_{\mathrm{up}}\sqrt{\Delta_D}}{2}<\tau<A_{\mathrm{up}}\sqrt{\Delta_D},\\
0, & \tau\ge A_{\mathrm{up}}\sqrt{\Delta_D}.
\end{cases}
\end{equation}
In particular, whenever $0<\tau\le A_{\mathrm{up}}\sqrt{\Delta_D}/2$,
\begin{equation}
\label{eq:kappa_upper_interior}
\rho_\tau(\mathcal{L})
\le
\frac{A_{\mathrm{up}}^2}{4\tau}
=
\frac{\beta^2 n}{2\tau}\sqrt{\pi |D|m}.
\end{equation}
\end{Corollary}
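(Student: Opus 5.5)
The plan is to combine Lemma~\ref{lem:Vk_kappa_radius} with the envelope bound \eqref{eq:radius_upper_envelope} of Proposition~\ref{prop:radius_upper}, and then solve a one-dimensional maximization. Proposition~\ref{prop:radius_upper} gives $\beta\sqrt n\,\mathcal E_D(t)\le \beta\sqrt n\sqrt{2}(\pi|D|m)^{1/4}\sqrt t = A_{\mathrm{up}}\sqrt t$ for every $t\in(0,\Delta_D]$. The map $y\mapsto (y-\tau)^+/t$ is nondecreasing in $y$, so substituting this bound termwise into \eqref{eq:kappa_radius} and taking the supremum over $t$ yields \eqref{eq:kappa_upper_scalar} immediately.

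For the piecewise formula \eqref{eq:kappa_upper_piecewise}, substitute $s=\sqrt t\in(0,\sqrt{\Delta_D}]$ and study
\[
g(s):=\frac{A_{\mathrm{up}}s-\tau}{s^2},
\]
restricted to the region where $A_{\mathrm{up}}s>\tau$, that is $s>\tau/A_{\mathrm{up}}$. Outside this region the positive part is zero.

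Differentiating gives
\[
g'(s)=\frac{-A_{\mathrm{up}}s+2\tau}{s^3},
\]
so $g$ increases on $(0,2\tau/A_{\mathrm{up}}]$ and decreases afterward. Its unconstrained maximizer is therefore $s^\star=2\tau/A_{\mathrm{up}}$, which lies in the positive region because $2\tau/A_{\mathrm{up}}>\tau/A_{\mathrm{up}}$. The maximum value is $g(s^\star)=\tau/(4\tau^2/A_{\mathrm{up}}^2)=A_{\mathrm{up}}^2/(4\tau)$.

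The three cases follow from where $s^\star$ falls relative to $\sqrt{\Delta_D}$.
\begin{enumerate}[(i)]
\item If $s^\star\le\sqrt{\Delta_D}$, i.e.\ $\tau\le A_{\mathrm{up}}\sqrt{\Delta_D}/2$, the supremum equals $A_{\mathrm{up}}^2/(4\tau)$.
\item If $\tau>A_{\mathrm{up}}\sqrt{\Delta_D}/2$, then $g$ is increasing on the whole feasible interval, so the supremum is attained at the endpoint $s=\sqrt{\Delta_D}$. It equals $(A_{\mathrm{up}}\sqrt{\Delta_D}-\tau)/\Delta_D$ whenever this is positive, namely when $\tau<A_{\mathrm{up}}\sqrt{\Delta_D}$.
\item If $\tau\ge A_{\mathrm{up}}\sqrt{\Delta_D}$, the numerator is nonpositive for every feasible $s$, so the supremum is $0$.
\end{enumerate}

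Finally, \eqref{eq:kappa_upper_interior} is obtained by expanding $A_{\mathrm{up}}^2=2\beta^2 n\sqrt{\pi|D|m}$ and dividing by $4\tau$.

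No step is a real obstacle. The only point needing care is the monotonicity argument used to pass the envelope bound through the positive part. The bookkeeping at the case boundaries should also be checked for continuity: at $\tau=A_{\mathrm{up}}\sqrt{\Delta_D}/2$ both expressions equal $A_{\mathrm{up}}/(2\sqrt{\Delta_D})$.
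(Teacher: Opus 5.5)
Your proposal is correct and follows the paper's argument. Like the paper, you substitute the envelope bound of Proposition~\ref{prop:radius_upper} into the representation \eqref{eq:kappa_radius}, maximize the resulting scalar function, and split into three regimes according to whether the unconstrained maximizer $t^\star=4\tau^2/A_{\mathrm{up}}^2$ lies in $(0,\Delta_D]$. The differences are minor: you change variables to $s=\sqrt t$ instead of differentiating in $t$, and you state explicitly the monotonicity of $y\mapsto (y-\tau)^+/t$ used to pass the bound through the positive part, which the paper leaves implicit.
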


\textit{Proof.}
The inequality \eqref{eq:kappa_upper_scalar} follows directly by substituting the envelope bound \eqref{eq:radius_upper_envelope} into the representation of $\rho_\tau(\mathcal{L})$ given in \eqref{eq:kappa_radius}. To evaluate this scalar supremum, observe that the positive part vanishes whenever $A_{\mathrm{up}}\sqrt t\le \tau$. Thus, it suffices to analyze the active region $t > (\tau/A_{\mathrm{up}})^2$, over which the objective function reduces to
\[
h(t):=\frac{A_{\mathrm{up}}\sqrt t-\tau}{t}=\frac{A_{\mathrm{up}}}{\sqrt t}-\frac{\tau}{t}.
\]
Differentiating $h$ with respect to $t$ yields
\[
h'(t)=\frac{1}{t^2}\left(\tau-\frac{A_{\mathrm{up}}}{2}\sqrt t\right),
\]
which admits a unique unconstrained maximizer at $t^*=\frac{4\tau^2}{A_{\mathrm{up}}^2}$. To determine the constrained supremum on $(0, \Delta_D]$, we consider three regimes for the target $\tau$: i) If $t^* \le \Delta_D$, which occurs if and only if $0 < \tau \le A_{\mathrm{up}}\sqrt{\Delta_D}/2$, the maximizer $t^*$ is feasible. Evaluating the objective at this interior peak yields $h(t^*) = A_{\mathrm{up}}^2 / (4\tau)$. ii) If $t^* > \Delta_D$, corresponding to $A_{\mathrm{up}}\sqrt{\Delta_D}/2 < \tau < A_{\mathrm{up}}\sqrt{\Delta_D}$, we have $h'(t) > 0$ strictly throughout the feasible active interval. Consequently, the maximum is attained at the right boundary $t = \Delta_D$, yielding $h(\Delta_D) = (A_{\mathrm{up}}\sqrt{\Delta_D} - \tau) / \Delta_D$. iii) If $\tau \ge A_{\mathrm{up}}\sqrt{\Delta_D}$, the condition $A_{\mathrm{up}}\sqrt t - \tau \le 0$ holds for all $t \in (0, \Delta_D]$. The active region is empty, and the supremum is trivially $0$, thereby proving the corollary. \halmos

Corollary~\ref{cor:kappa_upper} shows that the upper bound on the TSP tardiness index exhibits a three-regime structure. In the tight-target regime, the maximizing Wasserstein radius is interior and the bound reduces to $\rho_\tau(L) \le \frac{\beta^2 n}{2\tau}\sqrt{\pi |D|m}$. 
As expected, larger demand volume, denser potential demand points, or a larger service area all amplify the potential target violation induced by spatial demand distributional shifts, whereas a more relaxed target reduces this sensitivity. Notably, the \(\sqrt m\) term reflects the geometry of a unit-Wasserstein sensitivity measure rather than a degradation from using more data: a denser empirical support creates more local perturbation directions under a fixed Wasserstein budget. Moreover, in contrast to the standard BHH scaling with $\sqrt{n}$, the TSP tardiness index could scale linearly in $n$, reflecting that target violations become increasingly difficult to avoid as demand volume grows, and the economies of scale in TSP do not immediately translate to routing reliability.  When the target becomes less restrictive, the maximizing Wasserstein radius reaches the boundary \(\Delta_D\), and the bound decreases linearly in \(\tau\). Once the target is sufficiently large, the upper bound becomes zero, indicating that the target is stable under all possible perturbations covered by the global envelope.
\subsection{Lower Bound} 
The upper-bound analysis identifies a natural growth rate for the radius-constrained envelope $\mathcal{E}_D(t)$. To obtain a meaningful lower bound, we require a regularity condition that ensures the empirical support contains sufficient local interior neighborhoods for feasible perturbations, which is formalized as follows.

\begin{assumption}[Interior support dispersion]
\label{ass:interior_dispersion}
There exist constants $\eta\in(0,1]$ and $\zeta>0$, both independent of $m$ \textcolor{black}{and of the scale of $D$}, and an index set $J_m\subseteq\{1,\ldots,m\}$ of cardinality $|J_m|\ge \eta m$, such that the balls 
\[
\textcolor{black}{\bigl\{B(x_j,\zeta |D|^{1/2}m^{-1/2}):j\in J_m\bigr\}}
\]
are pairwise disjoint and contained in $D$.
\end{assumption}

Assumption~\ref{ass:interior_dispersion} only requires that a positive fraction of the empirical atoms remain separated at the natural planar scale $|D|^{1/2}m^{-1/2}$ and stay away from the boundary. In particular, it rules out severe clustering and boundary concentration, but does not impose any lattice structure or global regularity. Under this condition, we develop a local lower bound on $\mathcal E_D(t)$ by concentrating probability mass around any generic subset of well-separated atoms in Proposition \ref{prop:radius_lower_general}.

\begin{Proposition}
\label{prop:radius_lower_general}
Let $\mathcal{J} = \{j_1,\ldots,j_q\}\subseteq\{1,\ldots,m\}$ be a collection of distinct indices, and let $\{r_s\}_{s=1}^q$ be positive radii such that the balls $B(x_{j_s},r_s)$ are pairwise disjoint and contained in $D$. Defining $S_q := \sum_{s=1}^q r_s$, one has 
\begin{equation}
\label{eq:radius_lower_general}
\mathcal E_D(t)\ge \frac{4}{3}\sqrt{\pi S_q\,t},\quad \forall t \in (0, S_q / (2m)].
\end{equation}
Consequently, if Assumption \ref{ass:interior_dispersion} holds, defining $\overline{t}_{\mathrm{low}} := \frac{\eta\zeta\sqrt{|D|}}{2\sqrt m}$, the radius-constrained envelope satisfies$$\label{eq:radius_lower_envelope}
\mathcal E_D(t) \ge \frac{4}{3}\sqrt{\pi\eta\zeta}|D|^{1/4}m^{1/4}\sqrt t, \quad \forall t \in (0, \overline{t}_{\mathrm{low}}].$$

\end{Proposition}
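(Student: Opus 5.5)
The plan is to build an explicit density $f$ inside the Wasserstein ball of radius $t$ (up to an arbitrarily small slack) whose BHH factor $\int_D\sqrt f$ matches the right-hand side of \eqref{eq:radius_lower_general}. I will then let the slack vanish, using the fact that $\mathcal E_D$ is defined as a supremum. The mass budget is the binding constraint. Each atom carries mass $1/m$, and $f$ must be absolutely continuous. So every atom keeps most of its mass in a tiny disk $B(x_r,\epsilon)$: this costs at most $\epsilon$ in transport and contributes at most $O(\epsilon)$ to $\int\sqrt f$. Only a controlled fraction of the mass of each selected atom $x_{j_s}$ is spread over its ball $B(x_{j_s},r_s)$.

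\emph{Construction on each ball.} On $B(x_{j_s},r_s)$ I will place the profile
\[
g_s(x)=\frac{c_s}{\|x-x_{j_s}\|},
\]
which is integrable in the plane. Writing $r=r_s$ and $c=c_s$, polar coordinates give three quantities:
\begin{itemize}
\item the mass is $\int g_s=2\pi c r$;
\item the first moment, which is the cost of transporting this mass to $x_{j_s}$, is $\int\|x-x_{j_s}\|g_s=\pi c r^2$;
\item the routing factor is $\int\sqrt{g_s}=\sqrt c\,2\pi\int_0^{r}\rho^{1/2}d\rho=\tfrac{4\pi}{3}\sqrt c\,r^{3/2}$.
\end{itemize}
The key choice is $c_s=\kappa/r_s$ for a common $\kappa>0$. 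This choice makes each ball's cost equal to $\pi\kappa r_s$ and each ball's routing factor equal to $\tfrac{4\pi}{3}\sqrt\kappa\,r_s$, so both are linear in $r_s$ and sum to multiples of $S_q$.

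\emph{Choosing $\kappa$.} Setting $\kappa=t/(\pi S_q)$ makes the total transport cost equal to $t$. The total routing factor is then $\tfrac{4\pi}{3}\sqrt{\kappa}\,S_q=\tfrac43\sqrt{\pi S_q t}$, exactly the target. The mass placed on ball $s$ is $2\pi\kappa=2t/S_q$. This is at most $1/m$ precisely when $t\le S_q/(2m)$, which is where the range restriction in \eqref{eq:radius_lower_general} comes from.

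\emph{Assembling $f$.} Let $f$ be the sum of three pieces:
\begin{itemize}
\item $\sum_s g_s$ on the disjoint balls;
\item the residual mass $1/m-2t/S_q$ of each selected atom, spread uniformly on $B(x_{j_s},\epsilon)$;
\item the mass $1/m$ of every unselected atom, spread uniformly on $B(x_r,\epsilon)$.
\end{itemize}
Because the balls are disjoint and contained in $D$, $f\in\mathcal P(D)$. Mapping every piece to its own atom gives a coupling with $\widehat P_b$ whose cost is at most $t+\epsilon$, so $W_1(f,\widehat P_b)\le t+\epsilon$. Since $\sqrt{\cdot}$ is monotone and the pieces are nonnegative, $\int_D\sqrt f\ge\sum_s\int\sqrt{g_s}$, even where the $\epsilon$-disks overlap the balls.

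\emph{Removing the slack.} To land exactly inside radius $t$, I will run the construction with $t-\epsilon$ in place of $t$. This gives $\mathcal E_D(t)\ge\tfrac43\sqrt{\pi S_q(t-\epsilon)}$, and letting $\epsilon\downarrow0$ yields \eqref{eq:radius_lower_general}.

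\emph{Consequence under Assumption~\ref{ass:interior_dispersion}.} Take $\mathcal J=J_m$ and $r_s=\zeta|D|^{1/2}m^{-1/2}$. Then
\[
S_q\ge \eta m\cdot\zeta|D|^{1/2}m^{-1/2}=\eta\zeta\sqrt{|D|m},
\]
so $S_q/(2m)\ge \overline t_{\mathrm{low}}$. Substituting this lower bound for $S_q$ into $\tfrac43\sqrt{\pi S_q t}$ gives the stated envelope bound for all $t\in(0,\overline t_{\mathrm{low}}]$.

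The main obstacle is finding a profile whose cost and routing factor both scale linearly in $r_s$. The natural uniform profile gives sums of $r_s^3$ or $r_s^{3/2}$ rather than $S_q$. The $1/\|x\|$ kernel with $c_s\propto 1/r_s$ resolves this, and it also reproduces the constant $4/3$. The remaining care is bookkeeping: the mass feasibility condition $2t/S_q\le 1/m$ and the $\epsilon$-limit.
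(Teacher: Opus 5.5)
Your proposal is correct and follows the paper's proof essentially step for step. Your profile $c_s/\|x-x_{j_s}\|$ with $c_s=t/(\pi S_q r_s)$ is exactly the paper's $\alpha_s h_s$, where $\alpha_s=2t/S_q$ and $h_s=(2\pi r_s\|x-x_{j_s}\|)^{-1}\mathbf 1_{B(x_{j_s},r_s)}$. The residual-mass $\epsilon$-disks, the atom-wise coupling, the slack removal (the paper uses $t-\delta$ with $\varepsilon\le\delta$) and the specialization under Assumption~\ref{ass:interior_dispersion} also match. One detail needs tightening: unselected atoms may lie on $\partial D$, so spread their mass over $D\cap B(x_r,\epsilon)$ rather than $B(x_r,\epsilon)$; that set has positive area because $D$ has Lipschitz boundary, and this is what actually guarantees $f\in\mathcal P(D)$.
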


\textit{Proof.}
For any given target distance $t\in(0,S_q/(2m)]$, we establish the lower bound by explicitly constructing a feasible composite density characterized by isolated radial peaks in four steps.

\medskip
\noindent\textit{Step 1: A unit-mass radial density on each selected ball.}
 For each index $s=1,\ldots,q$, we define the normalized radial density as
\[
h_s(x):=\frac{1}{2\pi r_s\|x-x_{j_s}\|}\,\mathbf 1_{\{\|x-x_{j_s}\|\le r_s\}}.
\]
A direct polar-coordinate calculation yields
\[
\int_D h_s(x)\,dx=1,\qquad
\int_D \sqrt{h_s(x)}\,dx=\frac{2\sqrt{2\pi}}{3}\,r_s,\qquad
\int_D \|x-x_{j_s}\|h_s(x)\,dx=\frac{r_s}{2}.
\]

\medskip
\noindent\textit{Step 2: Construction of a feasible density.}
 Let $\delta\in(0,t)$ and set $\alpha_s:=2(t-\delta)/S_q$ for $s=1,\ldots,q$. Since $t\le S_q/(2m)$, one has $0<\alpha_s<2t/S_q\le 1/m$, so $m^{-1}-\alpha_s\ge 0$. Let $\varepsilon\in(0,\delta]$ be sufficiently small, and define
\[
g_{\ell,\varepsilon}(x):=\frac{\mathbf 1_{D\cap B(x_\ell,\varepsilon)}(x)}{|D\cap B(x_\ell,\varepsilon)|},
\qquad \ell=1,\ldots,m.
\]
Because the sample set is finite and $D$ has nonempty interior and Lipschitz boundary, $\varepsilon$ can be chosen so that $|D\cap B(x_\ell,\varepsilon)|>0$ for every $\ell$. Let
\[
f_{\delta,\varepsilon}(x):=
\sum_{s=1}^q \alpha_s h_s(x)
+\sum_{s=1}^q\Bigl(\frac1m-\alpha_s\Bigr)g_{j_s,\varepsilon}(x)
+\sum_{\ell\notin\{j_1,\ldots,j_q\}}\frac1m\,g_{\ell,\varepsilon}(x).
\]
Since each $h_s$ and $g_{\ell,\varepsilon}$ integrates to one, it follows that $\int_D f_{\delta,\varepsilon}(x)\,dx=1$, and hence $f_{\delta,\varepsilon}\in\mathcal P(D)$.

\medskip
\noindent\textit{Step 3: Lower-bounding the objective.}
 Because the balls $B(x_{j_s},r_s)$ are pairwise disjoint and $f_{\delta,\varepsilon}(x)\ge \alpha_s h_s(x)$ on each such ball,
\[
\int_D \sqrt{f_{\delta,\varepsilon}(x)}\,dx
\ge
\sum_{s=1}^q \int_{B(x_{j_s},r_s)}\sqrt{\alpha_s h_s(x)}\,dx
=
\sum_{s=1}^q \sqrt{\alpha_s}\int_D \sqrt{h_s(x)}\,dx.
\]
Using the identities from Step 1 and $\sum_{s=1}^q r_s=S_q$, we obtain
\[
\int_D \sqrt{f_{\delta,\varepsilon}(x)}\,dx
\ge
\frac{2\sqrt{2\pi}}{3}\sum_{s=1}^q r_s\sqrt{\frac{2(t-\delta)}{S_q}}
=
\frac{4}{3}\sqrt{\pi S_q(t-\delta)}.
\]

\medskip
\noindent\textit{Step 4: Verification of Wasserstein feasibility.} It remains to show that the constructed density lies within the prescribed Wasserstein radius.
Consider the transport plan that sends the mass $\alpha_s h_s(x)\,dx$ and $(m^{-1}-\alpha_s)g_{j_s,\varepsilon}(x)\,dx$ to $x_{j_s}$ for each $s$, and sends the mass $m^{-1}g_{\ell,\varepsilon}(x)\,dx$ to $x_\ell$ for each $\ell\notin\{j_1,\ldots,j_q\}$. This defines a feasible coupling between $f_{\delta,\varepsilon}(x)\,dx$ and $\widehat P_b$, and its cost is at most
\[
\sum_{s=1}^q \alpha_s\int_D \|x-x_{j_s}\|h_s(x)\,dx+\varepsilon
=
\frac12\sum_{s=1}^q \alpha_s r_s+\varepsilon
=
t-\delta+\varepsilon
\le t.
\]
Hence $W_1(f_{\delta,\varepsilon},\widehat P_b)\le t$, so $f_{\delta,\varepsilon}$ is feasible for $\mathcal E_D(t)$. Combining this with the bound from Step 3 gives
\[
\mathcal E_D(t)\ge \frac{4}{3}\sqrt{\pi S_q(t-\delta)}
\qquad\text{for every }\delta\in(0,t).
\]
Letting $\delta\downarrow0$ proves \eqref{eq:radius_lower_general}.

Under Assumption \ref{ass:interior_dispersion}, we apply the generic bound by selecting the index set $\mathcal{J} = J_m$ and assigning the uniform radius $\textcolor{black}{r_s = \zeta|D|^{1/2} m^{-1/2}}$ to all selected atoms. This yields a selected cardinality $q = |J_m| \ge \eta m$ and an aggregate radius $S_q = q\zeta |D|^{1/2}m^{-1/2} \ge \eta\zeta\sqrt{|D|m}$. For every $t \in (0, S_q/(2m)]$, the generic lower bound guarantees$$\mathcal E_D(t) \ge \frac{4}{3}\sqrt{\pi S_q t} \ge \frac{4}{3}\sqrt{\pi\eta\zeta}|D|^{1/4}m^{1/4}\sqrt t.$$Because the upper limit of the feasible target distance satisfies $S_q/(2m) \ge \eta\zeta |D|^{1/2}/ (2\sqrt m) = \overline{t}_{\mathrm{low}}$, this global envelope bound holds uniformly over the entire interval $(0, \overline{t}_{\mathrm{low}}]$. 
\halmos
\begin{Remark}
\label{rem:lower_global_monotone}
Although Proposition~\ref{prop:radius_lower_general} constructs the lower bound only on the local interval $t\in(0,\bar t_{\mathrm{low}}]$, the envelope $\mathcal E_D(t)$ is nondecreasing in $t$ because the feasible Wasserstein ball expands with the radius. Therefore, under Assumption~\ref{ass:interior_dispersion},
\[
\mathcal E_D(t)\ge \frac{4}{3}\sqrt{\pi\eta\zeta}|D|^{1/4}m^{1/4}\sqrt{\min\{t,\bar t_{\mathrm{low}}\}},
\qquad 0<t\le \Delta_D.
\]
We focus on the local form over $(0,\bar t_{\mathrm{low}}]$, since that is the regime relevant for the interior scaling analysis and the subsequent lower bound on $\rho_\tau(\mathcal{L})$.
\end{Remark}
Applying the general construction of Proposition \ref{prop:radius_lower_general} to the geometrically regular atoms specified in Assumption \ref{ass:interior_dispersion}, we obtain an explicit lower bound for the envelope $\mathcal E_D(t)$. Mirroring our earlier upper-bound analysis, this immediately translates into a closed-form lower bound for the tardiness index in Corollary \ref{cor:kappa_lower}.

\begin{Corollary}
\label{cor:kappa_lower}
Suppose Assumption \ref{ass:interior_dispersion} holds, and define the constant $A_{\mathrm{low}}:=\frac{4}{3}\beta\sqrt{\pi n\eta\zeta}|D|^{1/4}m^{1/4}$. Then, the tardiness index is bounded below by
\begin{equation}
\label{eq:kappa_lower_scalar}
\rho_\tau(\mathcal{L})
\ge
\sup_{0<t\le \overline{t}_{\mathrm{low}}}
\frac{\bigl(A_{\mathrm{low}}\sqrt t-\tau\bigr)^+}{t}.
\end{equation}
Moreover, this supremum evaluates to the piecewise function
\begin{equation}
\label{eq:kappa_lower_piecewise}
\sup_{0<t\le \overline{t}_{\mathrm{low}}}
\frac{\bigl(A_{\mathrm{low}}\sqrt t-\tau\bigr)^+}{t}
=
\begin{cases}
\dfrac{A_{\mathrm{low}}^2}{4\tau}, & 0<\tau\le \dfrac{A_{\mathrm{low}}\sqrt{\overline{t}_{\mathrm{low}}}}{2},\\
\dfrac{A_{\mathrm{low}}\sqrt{\overline{t}_{\mathrm{low}}}-\tau}{\overline{t}_{\mathrm{low}}}, & \dfrac{A_{\mathrm{low}}\sqrt{\overline{t}_{\mathrm{low}}}}{2}<\tau<A_{\mathrm{low}}\sqrt{\overline{t}_{\mathrm{low}}},\\
0, & \tau\ge A_{\mathrm{low}}\sqrt{\overline{t}_{\mathrm{low}}}.
\end{cases}
\end{equation}
In particular, whenever $0 < \tau \le A_{\mathrm{low}}\sqrt{\overline{t}_{\mathrm{low}}}/2$,
\begin{equation}
\label{eq:kappa_lower_interior}
\rho_\tau(\mathcal{L})
\ge
\frac{A_{\mathrm{low}}^2}{4\tau}
=
\frac{4\pi\beta^2 n\eta\zeta}{9\tau}\sqrt{|D|m}.
\end{equation}
\end{Corollary}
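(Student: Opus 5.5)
The plan mirrors the proof of Corollary~\ref{cor:kappa_upper}, with the inequalities reversed and the domain of $t$ restricted. We start from the exact representation \eqref{eq:kappa_radius} in Lemma~\ref{lem:Vk_kappa_radius}:
\[
\rho_\tau(\mathcal{L})=\sup_{0<t\le \Delta_D}\frac{\bigl(\beta\sqrt n\,\mathcal E_D(t)-\tau\bigr)^+}{t}.
\]
Shrinking the range of a supremum can only decrease it. Since $\overline{t}_{\mathrm{low}}\le\Delta_D$, we may restrict to $t\in(0,\overline{t}_{\mathrm{low}}]$. The inequality $\overline{t}_{\mathrm{low}}\le\Delta_D$ holds because each ball $B(x_j,\zeta|D|^{1/2}m^{-1/2})\subseteq D$ forces $\Delta_D\ge 2\zeta|D|^{1/2}m^{-1/2}\ge\overline{t}_{\mathrm{low}}$, using $\eta\le1$ and $m\ge1$.

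On this interval, Proposition~\ref{prop:radius_lower_general} under Assumption~\ref{ass:interior_dispersion} gives $\beta\sqrt n\,\mathcal E_D(t)\ge A_{\mathrm{low}}\sqrt t$. The map $y\mapsto (y-\tau)^+/t$ is nondecreasing in $y$, so substituting this bound termwise yields \eqref{eq:kappa_lower_scalar}.

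For the piecewise evaluation \eqref{eq:kappa_lower_piecewise}, we repeat the scalar calculus from Corollary~\ref{cor:kappa_upper}, with $A_{\mathrm{up}}$ replaced by $A_{\mathrm{low}}$ and $\Delta_D$ replaced by $\overline{t}_{\mathrm{low}}$. The positive part is active only for $t>(\tau/A_{\mathrm{low}})^2$. There the objective is
\[
h(t)=\frac{A_{\mathrm{low}}}{\sqrt t}-\frac{\tau}{t},
\qquad
h'(t)=t^{-2}\Bigl(\tau-\tfrac{A_{\mathrm{low}}}{2}\sqrt t\Bigr).
\]
So $h$ increases up to $t^*=4\tau^2/A_{\mathrm{low}}^2$ and decreases afterward. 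This gives three cases:
\begin{itemize}
\item If $t^*\le\overline{t}_{\mathrm{low}}$, the supremum is $h(t^*)=A_{\mathrm{low}}^2/(4\tau)$.
\item If $t^*>\overline{t}_{\mathrm{low}}$ but $A_{\mathrm{low}}\sqrt{\overline{t}_{\mathrm{low}}}>\tau$, then $h$ is increasing on the active part of the interval, and the supremum is $h(\overline{t}_{\mathrm{low}})$.
\item Otherwise the active set is empty and the supremum is $0$.
\end{itemize}
The first case is equivalent to $\tau\le A_{\mathrm{low}}\sqrt{\overline{t}_{\mathrm{low}}}/2$. In that case, substituting $A_{\mathrm{low}}^2=\frac{16}{9}\pi\beta^2 n\eta\zeta|D|^{1/2}m^{1/2}$ gives $A_{\mathrm{low}}^2/(4\tau)=\frac{4\pi\beta^2n\eta\zeta}{9\tau}\sqrt{|D|m}$, which is \eqref{eq:kappa_lower_interior}.

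None of these steps is a real obstacle. The only points needing care are the monotonicity of the positive-part quotient, which justifies substituting a lower bound inside the supremum, and checking that $\overline{t}_{\mathrm{low}}\le\Delta_D$ so that the restricted supremum is legitimately a lower bound for the full one.
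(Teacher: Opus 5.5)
Your proposal is correct and follows essentially the same route as the paper. You substitute the envelope lower bound of Proposition~\ref{prop:radius_lower_general} into \eqref{eq:kappa_radius} restricted to $(0,\overline{t}_{\mathrm{low}}]$, then reuse the scalar maximization of Corollary~\ref{cor:kappa_upper} with $A_{\mathrm{up}},\Delta_D$ replaced by $A_{\mathrm{low}},\overline{t}_{\mathrm{low}}$; your explicit checks that $\overline{t}_{\mathrm{low}}\le\Delta_D$ (via the inscribed ball) and that $y\mapsto(y-\tau)^+/t$ is nondecreasing are correct and fill in details the paper leaves implicit.
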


\textit{Proof.} Substituting the envelope bound established in Proposition \ref{prop:radius_lower_general} into the variational representation of $\rho_\tau(\mathcal{L})$ given in \eqref{eq:kappa_radius} immediately establishes the inequality \eqref{eq:kappa_lower_scalar}. The subsequent scalar maximization problem is structurally identical to the one solved in Corollary \ref{cor:kappa_upper}, with the upper-bound parameters $A_{\mathrm{up}}$ and $\Delta_D$ systematically replaced by their lower-bound counterparts $A_{\mathrm{low}}$ and $\overline{t}_{\mathrm{low}}$. Consequently, the identical algebraic derivation yields the piecewise supremum \eqref{eq:kappa_lower_piecewise} and its explicit interior evaluation \eqref{eq:kappa_lower_interior}. \halmos

Corollary~\ref{cor:kappa_lower} complements the upper bound in Corollary~\ref{cor:kappa_upper} by showing that, under interior dispersion of the empirical support, the tight-target regime admits the same order of dependence on \(n\), \(m\), \(|D|\), and \(\tau\). Particularly, the lower bound scales with the parameters as $\frac{\beta^2 n}{\tau}\sqrt{|D|m}$, while the geometric regularity parameters only affect the multiplicative constant. Therefore, we can establish that the upper and lower bounds match in order when the maximizing Wasserstein radius remains interior. We next formalize this scaling rule and extend it to multi-vehicle tardiness indices.

\subsection{Interior-Regime Scaling Laws} 
The preceding bounds imply matching upper and lower orders for $\mathcal E_D(t)$ in the geometrically regular local regime, and thus extend to $\rho_\tau(\mathcal{L})$ whenever the unconstrained maximizer remains interior. Specifically, Proposition~\ref{prop:radius_upper} demonstrates that $\mathcal E_D(t)$ is bounded above by $\mathcal{O}(|D|^{1/4}m^{1/4}\sqrt{t})$, while Proposition~\ref{prop:radius_lower_general} confirms that this order is tightly attained from below under the geometric regularity of Assumption~\ref{ass:interior_dispersion}. Substituting this order into the representation \eqref{eq:kappa_radius} shows that, whenever the unconstrained maximizer remains in the interior of the feasible $t$-interval, the tardiness index satisfies
\[
\rho_\tau(\mathcal{L})=\Theta\left(\frac{\beta^2 n\sqrt{|D|m}}{\tau}\right),
\]
\noindent which scales linearly with the demand level $n$, increases with the empirical sample size $m$ at rate $\sqrt{m}$, increases with the zone area $|D|$ at rate $|D|^{1/2}$, and decreases inversely with the satisficing target $\tau$. Beyond this regime, the optimal radius reaches the feasible boundary, leading to a transition that is captured by the preceding piecewise upper and lower bounds. We numerically validate the scaling laws in \ref{ec:validation} of the Appendix.

This scaling characterization can also be extended to a multi-vehicle setting when the objective is to minimize the maximum routing time (e.g., the delivery system's makespan). Specifically, as shown in \cite{carlsson2025redesigning, carlsson2026equitable}, the minimax routing solution (that also minimizes the makespan) for $K$ identical vehicles would split the total routing time evenly, so the routing time of each vehicle is approximated by $\mathcal{L}_K(f)\approx \frac{\beta\sqrt n}{K}\int_D \sqrt{f(x)} dx $. Let \(\rho_\tau^{K}(\mathcal{L})\) denote the tardiness index for a \(K\)-vehicle routing system under a makespan target, leading to the following scaling result. 

\begin{Corollary}
\label{cor:kappa_multi_vehicle_scaling}
In the $K$-vehicle makespan minimizing setting, whenever the unconstrained maximizer in the analogue of \eqref{eq:kappa_radius} remains in the interior of the feasible $t$-interval, we have
\[
\rho_\tau^{K}(\mathcal{L})=\Theta\left(\frac{\beta^2 n\sqrt{|D|m}}{K^2\tau}
\right).
\]
\end{Corollary}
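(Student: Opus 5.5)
The plan is to reduce the $K$-vehicle index to the single-vehicle analysis by replacing $\beta$ with $\beta/K$ throughout. Since $\mathcal{L}_K(f)=\frac{\beta\sqrt n}{K}\int_D\sqrt{f(x)}\,dx$, the target-violation functional is $\Psi^K_\tau(f)=\mathcal{L}_K(f)-\tau$, and $\rho^K_\tau(\mathcal{L})=\rho(\Psi^K_\tau)$.

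First, I would repeat the argument of Lemma~\ref{lem:Vk_kappa_radius} verbatim with $\beta$ replaced by $\beta/K$. That proof uses only the definition of the envelope $\mathcal E_D(t)$ in \eqref{eq:radius_envelope}, which does not involve $\beta$, together with the monotonicity of the fixed-$k$ value function. It therefore yields
\[
\rho^K_\tau(\mathcal{L})=\sup_{0<t\le\Delta_D}\frac{\bigl(\tfrac{\beta}{K}\sqrt n\,\mathcal E_D(t)-\tau\bigr)^+}{t}.
\]

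Next, I would substitute the two envelope bounds, which are independent of the routing constant.
\begin{itemize}
\item For the upper bound, Proposition~\ref{prop:radius_upper} gives $\mathcal E_D(t)\le\sqrt2(\pi|D|m)^{1/4}\sqrt t$. Hence the computation in Corollary~\ref{cor:kappa_upper} applies with $A_{\mathrm{up}}$ replaced by $A^K_{\mathrm{up}}:=A_{\mathrm{up}}/K$. In the interior regime $\tau\le A^K_{\mathrm{up}}\sqrt{\Delta_D}/2$, this gives $\rho^K_\tau\le (A_{\mathrm{up}}/K)^2/(4\tau)=\frac{\beta^2 n\sqrt{\pi|D|m}}{2K^2\tau}$.
\item For the lower bound, under Assumption~\ref{ass:interior_dispersion}, Proposition~\ref{prop:radius_lower_general} and the computation of Corollary~\ref{cor:kappa_lower} with $A^K_{\mathrm{low}}:=A_{\mathrm{low}}/K$ give $\rho^K_\tau\ge\frac{4\pi\beta^2 n\eta\zeta\sqrt{|D|m}}{9K^2\tau}$, provided $\tau\le A^K_{\mathrm{low}}\sqrt{\bar t_{\mathrm{low}}}/2$.
\end{itemize}
In both cases the $K$-dependence enters only through the square of the scaled constant, which is where the factor $K^{-2}$ comes from.

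Finally, I would make ``interior'' precise as the requirement that the unconstrained maximizer $t^*=4\tau^2/(A^K)^2$ lie in the feasible interval for both bounds. Concretely, this means $\tau\le \frac{A_{\mathrm{low}}}{2K}\sqrt{\bar t_{\mathrm{low}}}$, which also implies the upper-bound condition since $A_{\mathrm{low}}\sqrt{\bar t_{\mathrm{low}}}\lesssim A_{\mathrm{up}}\sqrt{\Delta_D}$. Under this condition the two bounds match up to constants depending only on $\eta,\zeta$, which gives the $\Theta$ statement.

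The main obstacle is interpretive rather than technical. The statement takes the minimax approximation $\mathcal{L}_K$ as given, so the proof should state clearly that it inherits this modeling assumption. It should also state that the interior regime now depends on $K$: the threshold on $\tau$ shrinks by a factor of $1/K$. Once that is acknowledged, the rest is the rescaling argument above.
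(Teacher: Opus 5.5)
Your proposal is correct and matches the paper's proof. The paper likewise replaces $\beta\sqrt n$ by $\beta\sqrt n/K$ in \eqref{eq:kappa_radius} and reruns the interior-regime argument of Corollaries~\ref{cor:kappa_upper} and~\ref{cor:kappa_lower}, so that $A_{\mathrm{up}},A_{\mathrm{low}}$ become $A_{\mathrm{up}}/K,A_{\mathrm{low}}/K$ and the factor $K^{-2}$ appears on squaring.

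Your version is more explicit than the paper's. One small simplification and one correction: the upper bound $A_{\mathrm{up}}^2/(4K^2\tau)$ holds for every $\tau>0$, since it is the unconstrained supremum, so only the lower-bound interior condition is actually needed. Also, $A_{\mathrm{low}}\sqrt{\bar t_{\mathrm{low}}}\le A_{\mathrm{up}}\sqrt{\Delta_D}$ holds exactly, not merely up to constants as your $\lesssim$ suggests, which is what your implication requires.
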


\textit{Proof.}
Under the balanced $K$-vehicle leading-order property, the optimal VRP makespan can be expressed as
\[
\frac{\beta\sqrt n}{K}\int_D \sqrt{f(x)} dx.
\]
Accordingly, the coefficient $\beta\sqrt n$ in \eqref{eq:kappa_radius} is replaced by $\beta\sqrt n/K$, so that
\[
\rho_\tau^{K}(\mathcal{L})=\sup_{0<t\le \Delta_D}
\frac{\left(\frac{\beta\sqrt n}{K}\mathcal E_D(t)-\tau\right)^+}{t}.
\]
The claim then follows from the same interior-regime scaling argument as above, with $\beta\sqrt n$ replaced by $\beta\sqrt n/K$.\halmos

Corollary~\ref{cor:kappa_multi_vehicle_scaling} highlights the fleet-size effect on the delivery time target vulnerability. Growing the number of vehicles reduces not only the nominal route length but also the sensitivity of target violation to spatial distributional shifts, yielding a \(1/K^2\) reduction in the interior-regime tardiness index. This suggests increasing marginal returns in improving the system reliability from expanding the fleet. 

\subsection{System Design Implications: Partition Guidelines and Insights}
We now discuss how the closed-form scaling laws for the TSP tardiness index can inform region partition guidelines for delivery systems. Region partitioning, or districting, is a common tactical design decision in last-mile logistics \citep{banerjee2022fleet, guo2023towards}. The goal is to divide the service region into subregions, or zones, each managed by a dedicated delivery station with a given fleet of vehicles. The dedicated zones facilitate practical management and foster the accumulation of drivers' local tacit knowledge in the assigned territory. 

Without loss of generality, consider a partitioning task that aims to divide the service region \(D\) into two zones $D_1$ and $D_2$. Zone $D_i$ dispatches $K_i$ vehicles subject to a delivery time target $\tau_i$. Suppose historical and realized demand are uniformly distributed over \(D\), which is assumed for analytical convenience and can hold for fully built-out residential neighborhoods, then the sample demand size and realized demand size in each zone are proportional to its area, implying $m_i=m\frac{|D_i|}{|D|}, \ n_i=n\frac{|D_i|}{|D|}, i=1,2$. We consider two objectives. The first is aggregate tardiness-risk minimization, which concerns the sum of the two zone-level tardiness indices. The second is worst-zone tardiness-risk minimization, which aims to control the larger of the two zone-level tardiness indices. Proposition~\ref{cor:two_zone_area_ratio} presents the closed-form partition rules in the interior regimes of these two objectives.

\begin{Proposition}
\label{cor:two_zone_area_ratio}
Given the leading-order approximation $
\rho_{\tau_i}^{(K_i,D_i)}(\mathcal{L})\asymp \frac{\beta^2 n_i\sqrt{|D_i|m_i}}{K_i^2\tau_i},$ $i=1,2,$ the optimal partitions that satisfy the coverage constraint (i.e., $|D_1|+|D_2|=|D|$) under aggregate and worst-zone tardiness-risk objectives satisfy
\begin{enumerate}
    \item[(i)] Aggregate risk minimization (minimizing $\sum_{i=1}^2 \rho_{\tau_i}^{(K_i,D_i)}(\mathcal{L})$): $
    |D_1|/|D_2|=K_1^2\tau_1/K_2^2\tau_2.$
    \item[(ii)] Worst-zone risk minimization (minimizing $\max_{i=1,2} \rho_{\tau_i}^{(K_i,D_i)}(\mathcal{L})$):
    $|D_1|/|D_2|=K_1\sqrt{\tau_1}/K_2\sqrt{\tau_2}$.

\end{enumerate}
\end{Proposition}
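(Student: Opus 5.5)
The plan is to substitute the uniform-demand proportionality into the leading-order expression, so that each zone's tardiness index becomes a simple power of its area. Then each objective reduces to a one-dimensional convex problem in $a_1:=|D_1|$, with $a_2=|D|-a_1$ fixed by the coverage constraint.

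\emph{Reduction.} Substituting $n_i=n a_i/|D|$ and $m_i=m a_i/|D|$ gives
\[
n_i\sqrt{|D_i|m_i}=\frac{n a_i}{|D|}\sqrt{\frac{m a_i^2}{|D|}}=\frac{n\sqrt m}{|D|^{3/2}}\,a_i^2 .
\]
Hence, up to the common positive factor $c:=\beta^2 n\sqrt m/|D|^{3/2}$, which depends on neither the partition nor $i$,
\[
\rho_{\tau_i}^{(K_i,D_i)}(\mathcal L)\asymp c\,\frac{a_i^2}{w_i},\qquad w_i:=K_i^2\tau_i .
\]
Since $c$ is common to both zones, it does not affect either optimal partition and can be dropped.

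\emph{Part (i).} We minimize $g(a_1)=a_1^2/w_1+(|D|-a_1)^2/w_2$ over $a_1\in[0,|D|]$. This function is strictly convex, so its unique minimizer is given by the first-order condition $2a_1/w_1=2a_2/w_2$. Equivalently, one can use a Lagrange multiplier for the constraint $a_1+a_2=|D|$, which gives $a_i\propto w_i$. Either way, $|D_1|/|D_2|=w_1/w_2=K_1^2\tau_1/(K_2^2\tau_2)$. The minimizer lies strictly inside $(0,|D|)$, so the box constraints on $a_1$ are inactive.

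\emph{Part (ii).} Here $\phi_1(a_1)=a_1^2/w_1$ is continuous and strictly increasing on $[0,|D|]$, while $\phi_2(a_1)=(|D|-a_1)^2/w_2$ is continuous and strictly decreasing. Moreover, $\phi_1(0)=0<\phi_2(0)$ and $\phi_1(|D|)>0=\phi_2(|D|)$. By the intermediate value theorem, there is a unique crossing point $a_1^\ast$.

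At $a_1^\ast$ the maximum $\max\{\phi_1,\phi_2\}$ attains its minimum. Moving $a_1$ to the right of $a_1^\ast$ strictly increases $\phi_1$, which is then the larger of the two. Moving $a_1$ to the left strictly increases $\phi_2$, which is then the larger. So every other partition has a strictly larger worst-zone index, and $a_1^\ast$ is the unique minimizer.

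Solving the equalization $a_1^2/w_1=a_2^2/w_2$ with $a_i>0$ gives $a_1/a_2=\sqrt{w_1/w_2}=K_1\sqrt{\tau_1}/(K_2\sqrt{\tau_2})$.

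\emph{Main obstacle.} The calculus is routine. The delicate point is interpretive: the result relies on treating the $\asymp$ relation as an exact objective, with the same hidden constant in both zones. This is legitimate in the interior regime of Corollary~\ref{cor:kappa_multi_vehicle_scaling}, where the leading constant depends only on $\beta$ and the geometric regularity constants, which are assumed common across zones. I would state this explicitly and note that both optimal partitions are scale-invariant, so dropping $c$ and any common multiplicative constant is harmless.
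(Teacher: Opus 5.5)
Your proposal is correct and follows the paper's route. You substitute the uniform proportionality to reduce each index to a common constant times $|D_i|^2/(K_i^2\tau_i)$, then use strict convexity with the first-order condition for (i) and equalization of the two terms for (ii); your monotonicity and intermediate-value argument for (ii) is a slightly more careful version of the paper's ``shift area from the larger term'' step.
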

\textit{Proof.} Substituting $m_i=m(|D_i|/|D|)$ and $n_i=n(|D_i|/|D|)$ yields
\[
\rho_\tau^{(K_i,D_i)}(\mathcal{L})\asymp \frac{\beta^2 n\sqrt m}{|D|^{3/2}}\cdot \frac{|D_i|^2}{K_i^2\tau_i},\qquad i=1,2.
\]
Omitting the common positive constant, the optimization problems reduce to minimizing $\sum_{i=1}^2 \frac{|D_i|^2}{K_i^2\tau_i}$ for aggregate control, and minimizing $\max_{i=1,2} \left\{\frac{|D_i|^2}{K_i^2\tau_i}\right\}$ for worst-zone control, subject to $|D_1|+|D_2|=|D|$. For (i), the objective is strictly convex. The first-order necessary and sufficient condition yields $\frac{|D_1|}{K_1^2\tau_1}=\frac{|D_2|}{K_2^2\tau_2}$. For (ii), at the optimum, the two terms must be equal; otherwise, shifting area from the larger term to the smaller one strictly reduces the maximum. Equating the terms yields $\frac{|D_1|^2}{K_1^2\tau_1}=\frac{|D_2|^2}{K_2^2\tau_2}$. Both conditions immediately provide the stated optimal ratios. \halmos

\vspace{5pt}

Proposition~\ref{cor:two_zone_area_ratio} establishes simple principles for partitioning a region under different risk objectives. For aggregate target-risk minimization, the partition equalizes the marginal contribution of area to the total tardiness index, significantly allocating more area to zones with a larger fleet size or more relaxed targets (scaled by $K_i^2\tau_i$). In contrast, worst-zone control protects the most fragile zone by equalizing the absolute tardiness indices. The corresponding partition responds less aggressively to differences in vehicle capacity and delivery targets (scaled by $K_i\sqrt{\tau_i}$). When the same delivery target is imposed across zones, the two partition principles allocate areas based on the fleet size, e.g., the minimax partition allocates the delivery area proportionally to the fleet size. 

\begin{Remark}
    For nonuniform demand and more than two zones, the tardiness-risk objective can be optimized over using existing partitioning techniques such as Voronoi diagrams \citep{carlsson2025redesigning}. Specifically, given fixed depots or delivery stations, the partition can be controlled by a continuous zone-weight vector, which can then be optimized using a cutting plane or gradient descent algorithm \citep{carlsson2018wasserstein, carlsson2025redesigning}. Our proposed SOCP formulation can be called directly in those algorithms as a subroutine.
\end{Remark}

\section{Numerical Experiments}
In this section, we conduct numerical experiments to demonstrate the value of the tardiness index using synthetic and real-world data. The algorithms were implemented in Python using Mosek 11 for SOCP. All numerical experiments were conducted on a Windows 10 machine with a 64-bit operating system and 16GB of RAM. 

\subsection{Value of Tardiness Index} We examine the impact of the TSP tardiness index on the operational risk metrics with both synthetic data and real-world Amazon last-mile data \citep{merchan20242021}. The index is computed from a calibration sample and is then compared with out-of-sample overtime outcomes, including average overtime, 95\% Conditional Value-at-Risk (CVaR) of overtime, and overtime rate. These metrics are practically easy to interpret but hard to directly optimize over under locational uncertainty. 
In the synthetic experiment, we employ four demand distributions: uniform, beta, kernel-based clustering, and Gaussian mixture. For each distribution family, we generate 10 distinct zones, each with 35 daily demand realizations.  The detailed data generating process is presented in \ref{ec:syn_details} of the Appendix. Table~\ref{tab:sim_rho_overtime_by_distribution} summarizes the correlation coefficient, $R^2$, and $p$-value between the zone-level tardiness index and the considered operational risk metrics.

\begin{table}[htbp]
\centering
\caption{Zone-level association between $\rho^\star$ and overtime risk in synthetic simulations}
\label{tab:sim_rho_overtime_by_distribution}
\begin{adjustbox}{width=\textwidth}
\begin{tabular}{lccccccccc}
\toprule
\multirow{2}{*}{Demand distribution}
& \multicolumn{3}{c}{Average overtime}
& \multicolumn{3}{c}{95\% CVaR of overtime}
& \multicolumn{3}{c}{Overtime rate} \\
\cmidrule(lr){2-4}\cmidrule(lr){5-7}\cmidrule(lr){8-10}
& Correlation & $R^2$ & $p$-value
& Correlation & $R^2$ & $p$-value
& Correlation & $R^2$ & $p$-value \\
\midrule
Uniform
& 0.966 & 0.933 & $<0.001^{***}$
& 0.637 & 0.406 & $0.048^{**}$
& 0.831 & 0.690 & $0.003^{***}$ \\

Kernel
& 0.954 & 0.911 & $<0.001^{***}$
& 0.912 & 0.831 & $<0.001^{***}$
& 0.832 & 0.692 & $0.003^{***}$ \\

GMM + noise
& 0.958 & 0.917 & $<0.001^{***}$
& 0.955 & 0.913 & $<0.001^{***}$
& 0.876 & 0.767 & $<0.001^{***}$ \\

Beta
& 0.981 & 0.962 & $<0.001^{***}$
& 0.960 & 0.921 & $<0.001^{***}$
& 0.805 & 0.649 & $0.005^{***}$ \\
\bottomrule
\end{tabular}
\end{adjustbox}
\begin{tablenotes}
\footnotesize \item $^{***}p<0.01$;$^{**}p<0.05$;$^{*}p<0.10$.
\end{tablenotes}
\end{table}





We observe that the estimated tardiness index is consistently associated with out-of-sample overtime risk across all four demand families. The association is most stable for average overtime: the correlations are all above 0.95, the corresponding \(R^2\) values exceed 0.91, and all \(p\)-values are below 0.001. These results indicate that \(\rho^\star\) provides a reliable assessment of zones by their expected target-violation severity. The positive association also extends to tail- and frequency-based risk measures. For the 95\% CVaR of overtime, the relationship is strong for the nonuniform-demand families and remains statistically significant in the uniform case. For the overtime rate, all correlations are high and significant at the 1\% level. These results suggest that \(\rho^\star\) captures not only expected target-violation severity but also broader overtime-risk patterns, particularly when spatial heterogeneity is present. We further extend the evaluation to real-world data from two representative Amazon delivery stations; details are in \ref{EC:amazon}. While the Amazon data yield noisier estimates (partly due to unobservable factors), the tardiness index remains indicative of the overtime risk after accounting for service-time variability.

\subsection{Comparison of Partition Rules} 
The scaling analysis presented in Section~\ref{sec:scaling} establishes two closed-form partition rules. In this section, we report their efficacy via out-of-sample tests under stochastic demand realizations, where actual routing durations are computed by solving exact TSP instances. Specifically, we assume (i) demand size is drawn from a truncated Poisson distribution with a mean of 50 and support $[30, 80]$, (ii) demand locations are generated from four heterogeneous spatial distributions (uniform, kernel, Gaussian-mixture, and beta), and (iii) realized routing times are computed by solving the within-zone TSPs by Google OR-Tools. The evaluation reports the aggregate overtime (summation of the overtime across the zones) and maximum overtime metrics. 

We denote the proposed aggregate-risk and worst-zone-risk partition rules by \(R_T\) and \(R_M\), respectively. We compare them with three alternative partition rules that disregard distributional uncertainty: (i) Violation-total ($D_T$) that minimizes the total violation of the target; (ii) Violation-max ($D_M$) that minimizes the maximum violation of the target; and (iii) Length-max ($L_M$) that minimizes the maximum route length. The detailed definition,  implementation, and evaluation results are presented in \ref{ec:compare} of the Appendix.  We observe that the two tardiness-index partition rules have complementary strengths. For aggregate overtime metrics, \(R_T\) is consistently competitive and performs particularly well on risk-sensitive outcomes, including the 95\% CVaR of aggregate overtime and the aggregate overtime rate. 
For worst-case overtime metrics, \(R_M\) is most effective in controlling tail severity, achieving the smallest gap for the 95\% CVaR of maximum overtime. This aligns with its minimax construction, which balances the two zone-level tardiness indices and protects the more fragile zone. These results suggest that, despite their simplicity, the proposed partition rules exhibit superior risk-control performance. Notably, they achieve both competitive average and tail performances. 

\section{Concluding Remarks}
In this paper, we study the tardiness of TSP solutions using a robust satisficing model. We show that the TSP route target fragility, as measured by the TSP tardiness index, can grow linearly with the realized demand size, in contrast to the conventional square-root rate. The proven scaling law also links the tardiness index to the number of empirical support points, which is tied to the problem's spatial features. In addition to theoretical insights, we provide efficient computational tools and partition rules that facilitate the practical use of this index in managing routing systems. Because the derived structural properties concern locational uncertainty, future work can investigate tardiness arising from multiple sources of uncertainty, including travel and service times. It would also be valuable to extend the analysis to VRPs with heterogeneous vehicle capacities.


\bibliographystyle{informs2014trsc}
\bibliography{mybib}

\ECSwitch
\counterwithin{table}{section}
\counterwithin{figure}{section}
\numberwithin{equation}{section}  
\makeatletter

\renewcommand{\p@subfigure}{} 
\renewcommand{\thesubfigure}{\thesection.\arabic{figure}(\alph{subfigure})}
\makeatother
{\center \Large Appendix for ``Traveling Salesman Tardiness"}

\section{Supporting Numerical Results}
\subsection{Numerical Validation Results} \label{ec:validation}
Without loss of generality, we assume the historical demand locations are uniformly distributed over a unit-area square, and set the integration grid size to $50\times 50$. We take $(m,n,\tau)=(50,50,3)$ as the baseline configuration. Starting from this baseline, we vary two of the three parameters while fixing the remaining one at its baseline value. The TSP tardiness indices corresponding to these settings are computed using the proposed SOCP formulation. Figures~\ref{fig:fragility_a}--\ref{fig:fragility_f} illustrate how the tardiness index responds to changes in $m$, $n$, and $\tau$. The numerical pattern aligns with our theoretical bound analysis. Specifically, for fixed \(m\) and \(\tau\), the tardiness index increases approximately linearly with \(n\); for fixed \(n\) and \(\tau\), the index increases with \(m\), but at a visibly sublinear rate, which is consistent with the \(\sqrt m\)-type dependence predicted by the bounds. Moreover, the dependence on \(\tau\) is piecewise: the tardiness index decreases sharply when the target is tight, then decreases more gradually, and eventually reaches zero once the target becomes sufficiently relaxed. 
\begin{figure}[htbp]
    \centering
    \subfigure[$\tau=3$, x-axis: $n$]{
        \includegraphics[width=0.48\textwidth]{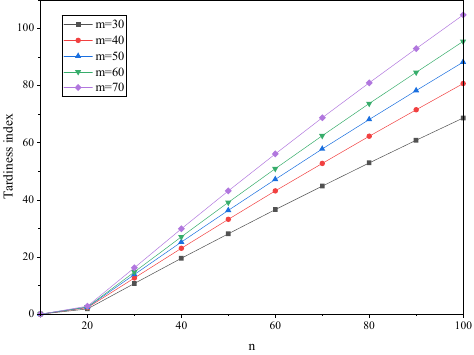}
        \label{fig:fragility_a}
    }%
    \hfill
    \subfigure[$m=50$, x-axis: $n$]{
        \includegraphics[width=0.48\textwidth]{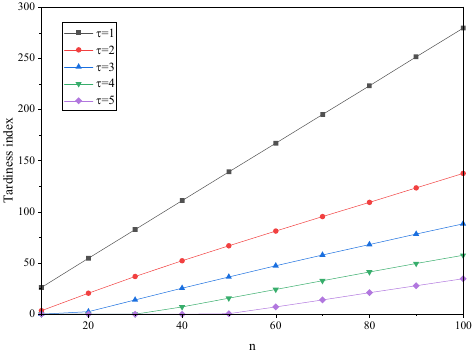}
        \label{fig:fragility_b}
    }%

   \vspace{-0.5em}

    \subfigure[$\tau=3$, x-axis: $m$]{
        \includegraphics[width=0.48\textwidth]{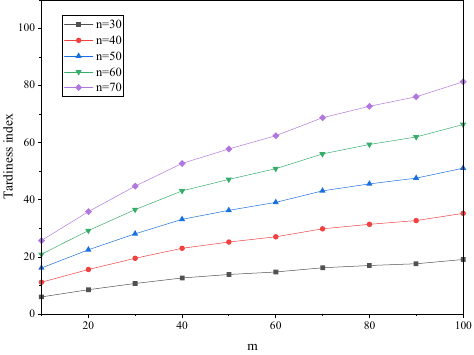}
        \label{fig:fragility_c}
    }%
    \hfill
    \subfigure[$n=50$, x-axis: $m$]{
        \includegraphics[width=0.48\textwidth]{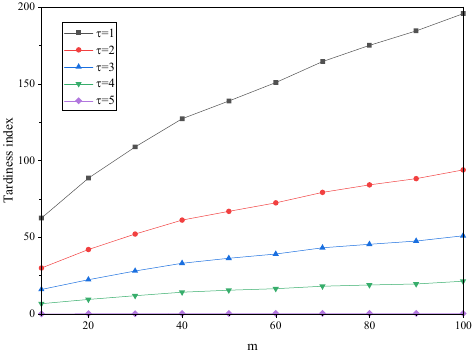}
        \label{fig:fragility_d}
    }%

    \vspace{-0.5em}

    \subfigure[$n=50$, x-axis: $\tau$]{
        \includegraphics[width=0.48\textwidth]{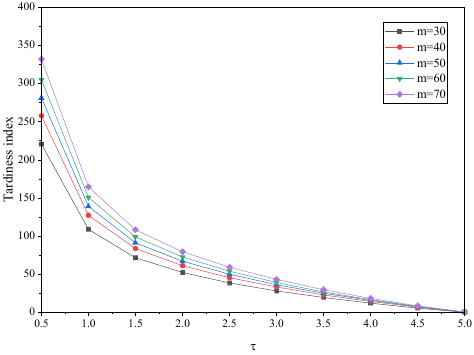}
        \label{fig:fragility_e}
    }%
    \hfill
    \subfigure[$m=50$, x-axis: $\tau$]{
        \includegraphics[width=0.48\textwidth]{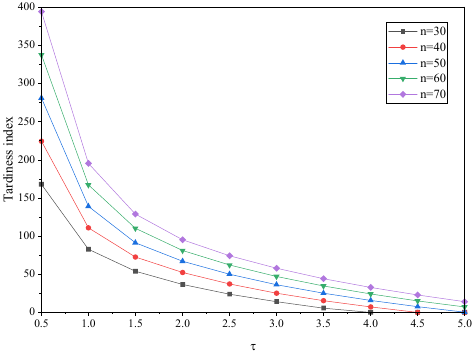}
        \label{fig:fragility_f}
    }%
    \caption{Sensitivity of the tardiness index to $m$, $n$, and $\tau$.}
    \label{fig:fragility}
\end{figure}
\subsection{Computational Performance Evaluation}\label{EC:computation}
We compare the computational efficiency of the analytic center cutting plane method (ACCPM), implemented based on Algorithm~1 of \citetapp{carlsson2018wasserstein}, and the direct SOCP approach for solving the single-zone DRO problem under a fixed Wasserstein radius~$t$. Specifically, we randomly generate historical demand locations with uniform probabilities in a unit square region, and we set the Wasserstein radius $t$ as 0.1. Since the efficiency of the DRO model is independent of the realized demand size $n$, Table~\ref{tab:time_comparison} reports the computational performance with varying empirical demand size $m$ and the grid resolution used to discretize the continuous integral. For each instance class, we report the average runtime of ACCPM and SOCP, in seconds, together with the relative time improvement $\frac{T_{\mathrm{ACCPM}}-T_{\mathrm{SOCP}}}{T_{\mathrm{ACCPM}}}\times 100\%$. All values are averaged over 10 randomly generated instances. Across all combinations of $m$ and grid size, the computational gap becomes particularly pronounced for moderate to large instances, where SOCP often achieves runtime reductions of roughly two orders of magnitude relative to ACCPM. This advantage is also reflected in the percentage reductions reported in the table, which exceed $90\%$ for most moderate and large instances and frequently exceed $99\%$. Overall, the computational burden of ACCPM increases much more sharply with problem size and grid resolution, and SOCP reformulation offers markedly better scalability.
\begin{table}[htbp]
\centering
\caption{Computational time comparison for the single-zone DRO problem}
\label{tab:time_comparison}
\resizebox{0.9\textwidth}{!}{
\begin{tabular}{c ccc ccc ccc}
\toprule
\multirow{2}{*}{$m$}
& \multicolumn{3}{c}{$30\times 30$ grid}
& \multicolumn{3}{c}{$50\times 50$ grid}
& \multicolumn{3}{c}{$100\times 100$ grid} \\
\cmidrule(lr){2-4} \cmidrule(lr){5-7} \cmidrule(lr){8-10}
& ACCPM & SOCP & Impr. (\%)
& ACCPM & SOCP & Impr. (\%)
& ACCPM & SOCP & Impr. (\%) \\
\midrule
$20$                 & 2.96                      & 0.08                     & 97.42                        & 2.79                      & 0.27                     & 90.15                        & 4.20                      & 1.42                     & 66.13                        \\
$40$                 & 13.57                     & 0.22                     & 98.36                        & 15.82                     & 1.09                     & 93.09                        & 41.87                     & 4.45                     & 89.36                        \\
$60$                 & 82.40                     & 0.31                     & 99.62                        & 109.47                    & 1.16                     & 98.94                        & 126.84                    & 7.26                     & 94.27                        \\
$80$                 & 857.98                    & 0.38                     & 99.96                        & 384.29                    & 1.31                     & 99.66                        & 486.97                    & 6.40                     & 98.69                        \\
$100$                & 633.34                    & 0.46                     & 99.93                        & 688.10                    & 1.33                     & 99.81                        & 935.89                    & 6.75                     & 99.28                  \\
\bottomrule
\end{tabular}
}
\end{table}

\section{Additional Details about the Experiments}
\subsection{Synthetic Experiment Setup} \label{ec:syn_details}
In the synthetic experiment, the daily demand volume for each zone is drawn from a truncated normal distribution with a zone-specific mean ranging between 80 and 100, while customer locations are sampled from the corresponding spatial distribution. For every zone-day instance, we compute the realized routing time by solving a Euclidean TSP using OR-Tools. Based on these realized TSP lengths, we also estimate the $\beta$ parameter of the BHH formula for each individual zone. 
We pool the realized daily TSP lengths across all zones and days within family $d$ to define the target as $\tau^{(d)} = c \cdot Q_q\left(\{L_{z,t}^{(d)}: z=1,\ldots,10,\ t=1,\ldots,35\}\right)$. Here, $L_{z,t}^{(d)}$ denotes the TSP length of zone $z$ on day $t$, and $Q_q(\cdot)$ represents the empirical $q$-quantile. In our computational setup, we set the scaling factor $c=0.95$ and the quantile $q = 0.45$. To evaluate out-of-sample performance, we apply a randomized cross-validation approach. For each zone, we generate 10 independent train-test splits, allocating 5 days to training and 30 days to testing. The tardiness index $\rho$ is calibrated on the training data, whereas operational risk metrics are evaluated on the test set. Finally, we average $\rho$ and the corresponding risk metrics across the 10 splits for each zone.

\subsection{Amazon Dataset: Analysis and Evaluation Results}\label{EC:amazon}
We examine the 18 stations recorded in the Amazon last-mile dataset \citepapp{merchan20242021}, noting substantial variation in data coverage, specifically, the number of zones and the number of active days with recorded routes. As detailed in Table~\ref{tab:amazon_station_zone_coverage}, columns \textit{$Z$} denote the total number of zones, columns \textit{Days} report the range of active days across these zones, and columns \textit{$Z_{25}$} indicate the number of zones with at least 25 active days. The stations are sorted by $Z_{25}$. We focus on DLA8 and DLA9 because they have the largest number of zones with sufficient active-day coverage, enabling a more stable station-zone-level assessment of the association between the tardiness-fragility index and out-of-sample performance. Specifically, DLA8 comprises 10 zones, and DLA9 comprises 9 zones, with each zone containing between 28 and 93 days of active route records. 

\begin{table}[htbp]
\centering
\caption{Zone-level coverage across Amazon stations}
\label{tab:amazon_station_zone_coverage}
\begin{tabular}{lrrr@{\qquad}lrrr}
\hline
Station & \(Z\) & Days & \(Z_{25}\) 
& Station & \(Z\) & Days & \(Z_{25}\) \\
\hline
DLA8 & 10 & 46--56 & 10 
& DSE4 & 4  & 44--97 & 4 \\
DLA9 & 9  & 28--93 & 9  
& DCH4 & 4  & 48--53 & 4 \\
DLA7 & 8  & 1--80  & 7  
& DCH1 & 6  & 14--58 & 4 \\
DLA4 & 9  & 1--48  & 7  
& DSE5 & 3  & 60--77 & 3 \\
DBO3 & 6  & 38--66 & 6  
& DLA5 & 5  & 6--48  & 3 \\
DCH3 & 5  & 47--57 & 5  
& DCH2 & 16 & 2--30  & 3 \\
DAU1 & 8  & 3--48  & 5  
& DBO2 & 2  & 66--79 & 2 \\
DLA3 & 16 & 1--55  & 5  
& DBO1 & 9  & 3--17  & 0 \\
DSE2 & 12 & 1--74  & 5  
& DBO6 & 1  & 1--1   & 0 \\
\hline
\end{tabular}
\end{table}

Consistent with our synthetic experimental design, we evaluate out-of-sample performance using randomized splits. For each zone, we randomly sample 5 days for calibration (training) and reserve the remaining days for testing. We establish a system-level service target of 8 hours, which closely aligns with the empirical average of 8.1 hours observed in the raw sample. Recognizing that individual zones exhibit heterogeneous road network structures and service-time compositions, we translate the global service target into zone-specific travel-time targets. Correspondingly, the BHH constant $\beta$ is calibrated independently for each zone using the calibration data exclusively. To ensure the robustness of our results, this calibration and evaluation procedure is repeated across 20 independent randomized splits per zone. The out-of-sample performance of both stations is summarized in Table~\ref{tab:rho_overtime_station}.


\begin{table}[htbp]
\centering
\caption{Zone-level association between $\rho^\star$ and overtime risk in Amazon last-mile data}
\label{tab:rho_overtime_station}
\begin{adjustbox}{width=0.95\textwidth}
\begin{tabular}{llcccccc}
\toprule
Performance measure & Station 
& Correlation & $R^2$ & $p$-value 
& Partial Correlation & $R^2$ with svc. & $p_{\rho^\star\mid \mathrm{svc}}$-value \\
\midrule
\multirow{2}{*}{Average overtime}
& DLA8 & 0.527 & 0.278 & 0.118 & 0.792 & 0.781 & $0.011^{**}$ \\
& DLA9 & 0.432 & 0.186 & 0.246 & 0.623 & 0.801 & $0.099^{*}$ \\
\midrule
\multirow{2}{*}{95\% CVaR of overtime}
& DLA8 & 0.289 & 0.083 & 0.418 & 0.484 & 0.557 & 0.186 \\
& DLA9 & 0.477 & 0.228 & 0.194 & 0.687 & 0.820 & $0.060^{*}$ \\
\midrule
\multirow{2}{*}{Overtime rate}
& DLA8 & 0.639 & 0.408 & $0.047^{**}$ & 0.798 & 0.720 & $0.010^{***}$ \\
& DLA9 & 0.518 & 0.268 & 0.153 & 0.875 & 0.941 & $0.004^{***}$ \\
\bottomrule
\end{tabular}
\end{adjustbox}
\end{table}

For each risk metric, we report both unconditional and service-time-controlled associations. We introduce this control because our tardiness index $\rho^\star$ is constructed from the spatial routing component, while the observed total time duration in the Amazon data includes both routing and stop-level service times. Notably, service time exhibits substantial cross-zone variation driven by on-site delivery conditions. The unconditional statistics, which are directly comparable to the simulation results, include the correlation with $\rho^\star$, the $R^2$ from a univariate regression on $\rho^\star$, and the corresponding $p$-value. The controlled statistics use the average training-set service time per route as an ex ante proxy for zone-level service difficulty and report the partial correlation with $\rho^\star$, the controlled-regression $R^2$, and the $p$-value of $\rho^\star$. Compared with the synthetic experiments, the unconditional associations in the Amazon data are weaker, reflecting the additional operational noise and cross-zone heterogeneity in real delivery operations. Nevertheless, the correlations are consistently positive across both stations and all three performance measures. More importantly, after controlling for training-set service time, the association between $\rho^\star$ and route overtime risk becomes substantially stronger for average overtime and overtime rate. These results suggest that service-time heterogeneity masks part of the spatial fragility signal in the raw data, while $\rho^\star$ remains informative after accounting for service difficulty.

\subsection{Details of the Comparison Analysis} \label{ec:compare}
We first provide the details of the three alternative partition rules. Let
\[
s:=\frac{|D_1|}{|D|},\qquad 
\bar L_1(s)=\frac{B_0}{K_1}s,\qquad 
\bar L_2(s)=\frac{B_0}{K_2}(1-s),
\]
where $B_0:=\beta\sqrt{n|D|}$. Here $\bar L_1(s)$ and $\bar L_2(s)$ denote the nominal routing time of the two zones. Table~\ref{tab:static_rules} formalizes the definition of the three rules.
\begin{table}[htbp]
    \centering
    \caption{Nominal benchmark partition rules}
    \label{tab:static_rules}
    \resizebox{\textwidth}{!}{
    \renewcommand{\arraystretch}{1.2} 
    \begin{tabular}{c c l}
    \toprule
    Name (Rule) & Objective & Closed-form share of Zone 1 \\
    \midrule
    
    \makecell{violation-total \\ ($D_T$)}
    & $\displaystyle \min_{0\le s\le 1}\Bigl(|\bar L_1(s)-\tau_1|+|\bar L_2(s)-\tau_2|\Bigr)$
    & $\displaystyle
    \begin{cases}
    \frac{\tau_1K_1}{B_0}, & K_1<K_2,\\[2mm]
    1-\frac{\tau_2K_2}{B_0}, & K_1>K_2,\\[2mm]
  \text{any }s \in 
\mathrm{conv}\left\{\frac{\tau_1K}{B_0}, 1-\frac{\tau_2K}{B_0}
\right\}\cap[0,1], & K_1=K_2
    \end{cases}$
    \\[8mm]

    \makecell{violation-max \\ ($D_M$)}
    & $\displaystyle \min_{0\le s\le 1}\max\Bigl\{|\bar L_1(s)-\tau_1|,\ |\bar L_2(s)-\tau_2|\Bigr\}$
    & $\displaystyle \frac{B_0K_1+K_1K_2(\tau_1-\tau_2)}{B_0(K_1+K_2)}$
    \\[6mm]


    \makecell{length-max \\ ($L_M$)}
    & $\displaystyle \min_{0\le s\le 1}\max\Bigl\{\bar L_1(s),\bar L_2(s)\Bigr\}$
    & $\displaystyle \frac{K_1}{K_1+K_2}$
    \\[2mm]
    
    \bottomrule
    \end{tabular}
    } 
\end{table}

For the partition-rule evaluation, we conduct the two-zone out-of-sample simulation on a $2\times 1$ rectangular service region. The system configurations are defined by three parameters: the fleet-size pair \((K_1,K_2)\in\{(2,2),(2,3),(3,2)\}\), the target ratio
\(r=\tau_1/\tau_2\in\{0.5,1,2\}\), and the aggregate target \(T\in\{2.9,3.2,3.5\}\). The ratio \(r\) controls the imbalance between the two zone-level targets, whereas \(T\) controls the overall target tightness. Given a pair $(r,T)$, the individual targets are set to $\tau_1 = \frac{Tr}{1+r}$ and $\tau_2 = \frac{T}{1+r}$. This design yields 27 fleet-target configurations, which are subsequently evaluated under the four aforementioned demand distributions, resulting in 108 unique experimental combinations. For each spatial distribution, customer locations are sampled without replacement from a fixed 200-point candidate pool, with selection probabilities strictly proportional to their corresponding spatial densities. Finally, we draw 100 independent replications for each of the 108 combinations to ensure statistical reliability.

Let $\mathrm{TSP}(S_i)$ denote the length of the optimal tour for the realized demand subset $S_i$ allocated to zone $i$. We define the normalized per-vehicle routing time as $L_i(S_i;K_i) = \mathrm{TSP}(S_i)/K_i$, which yields a zone-level overtime excess of $(L_i(S_i;K_i) - \tau_i)^+$. For each experimental replication, we compute the aggregate and maximum overtime excesses across the two zones as follows: $$E^{\mathrm{sum}} = \sum_{i=1}^2 \left(\frac{\mathrm{TSP}(S_i)}{K_i}-\tau_i\right)^+, \quad E^{\mathrm{max}} = \max_{i=1,2} \left(\frac{\mathrm{TSP}(S_i)}{K_i}-\tau_i\right)^+.$$

Figure~\ref{fig:partition} visualizes the performance gap to the best-performing rule across six out-of-sample metrics. The first two rows use relative gaps for overtime magnitude measures, while the third row uses absolute gaps in overtime rates, measured in percentage points. It shows that the proposed
tardiness-index rules remain close to the best-performing rule across most metrics. In particular, \(R_T\) is more competitive for aggregate overtime measures, while \(R_M\) is more effective for maximum-overtime and tail-risk measures. These patterns suggest that the tardiness index can provide a useful guide for delivery-zone partitioning.



\begin{figure}[htbp]
    \centering
    \subfigure[Average aggregate overtime (\%)]{
        \includegraphics[width=0.48\textwidth]{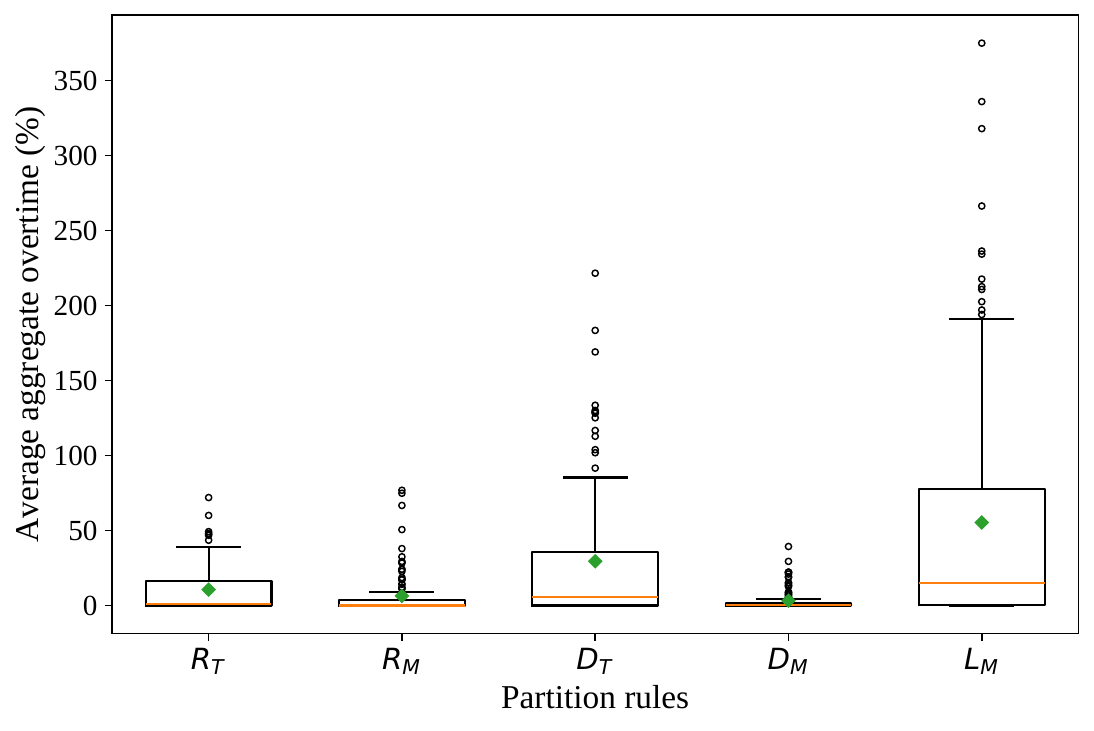}
        \label{fig:meanTE}
    }%
    \hfill
    \subfigure[Average maximum overtime (\%)]{
        \includegraphics[width=0.48\textwidth]{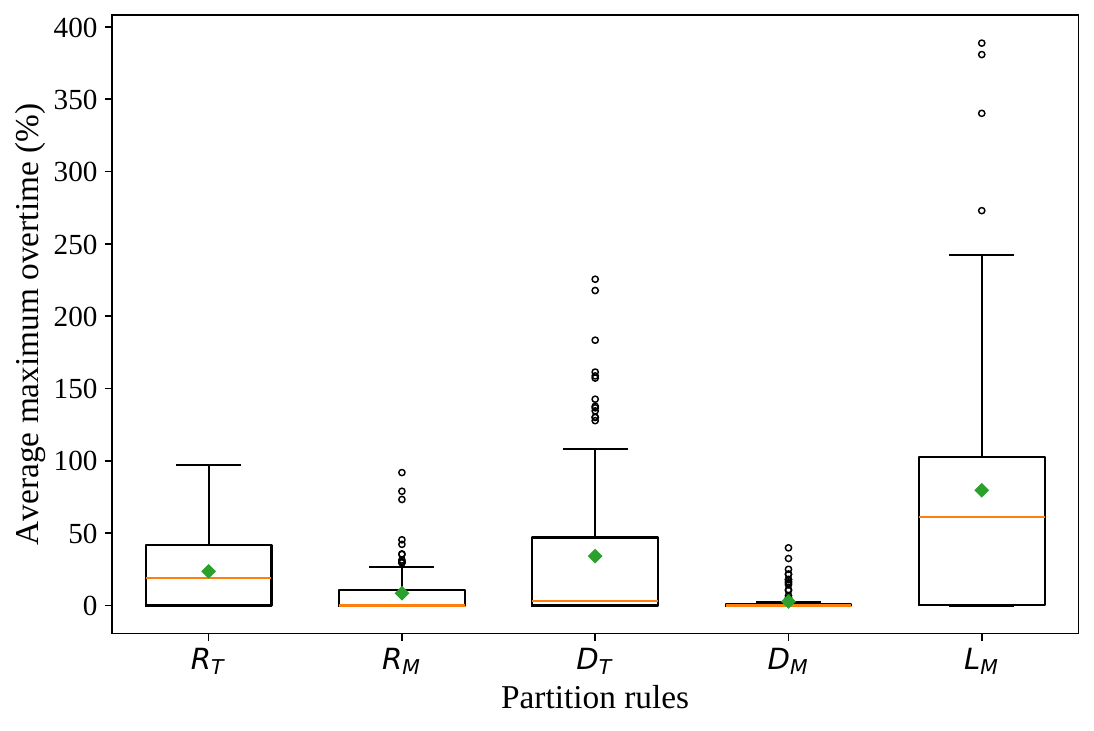}
        \label{fig:meanME}
    }%

    \vspace{-0.4em}

    \subfigure[95\% CVaR of aggregate overtime (\%)]{
        \includegraphics[width=0.48\textwidth]{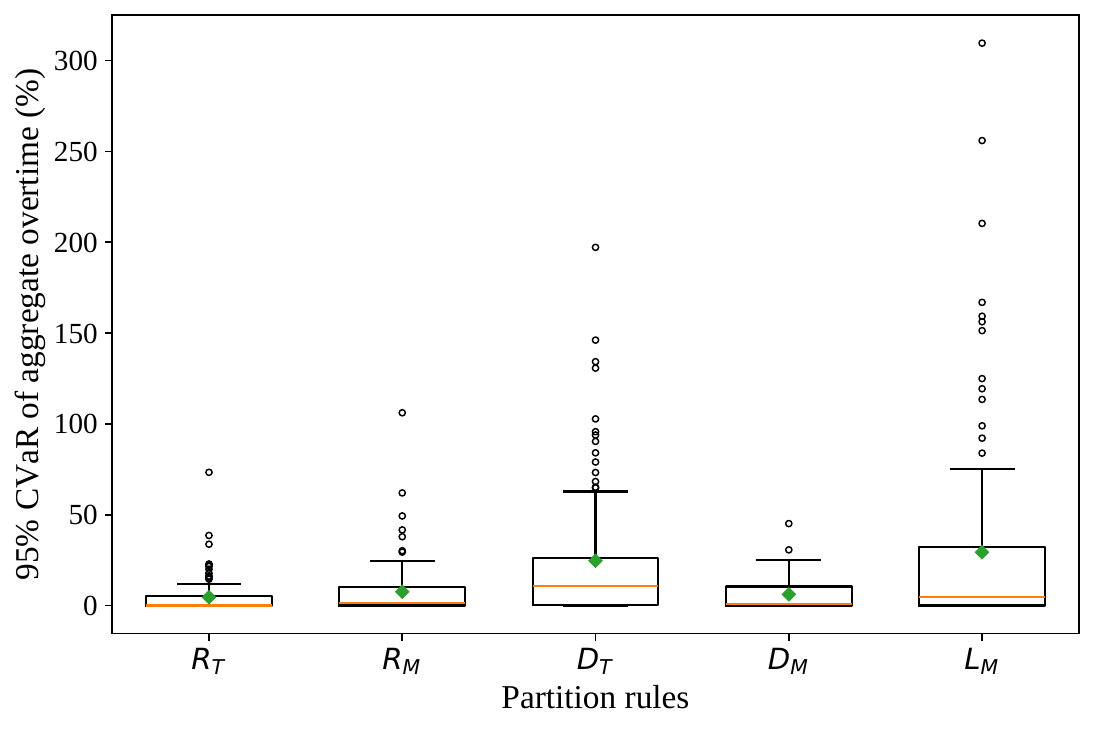}
        \label{fig:cvarTE}
    }%
    \hfill
    \subfigure[95\% CVaR of maximum overtime (\%)]{
        \includegraphics[width=0.48\textwidth]{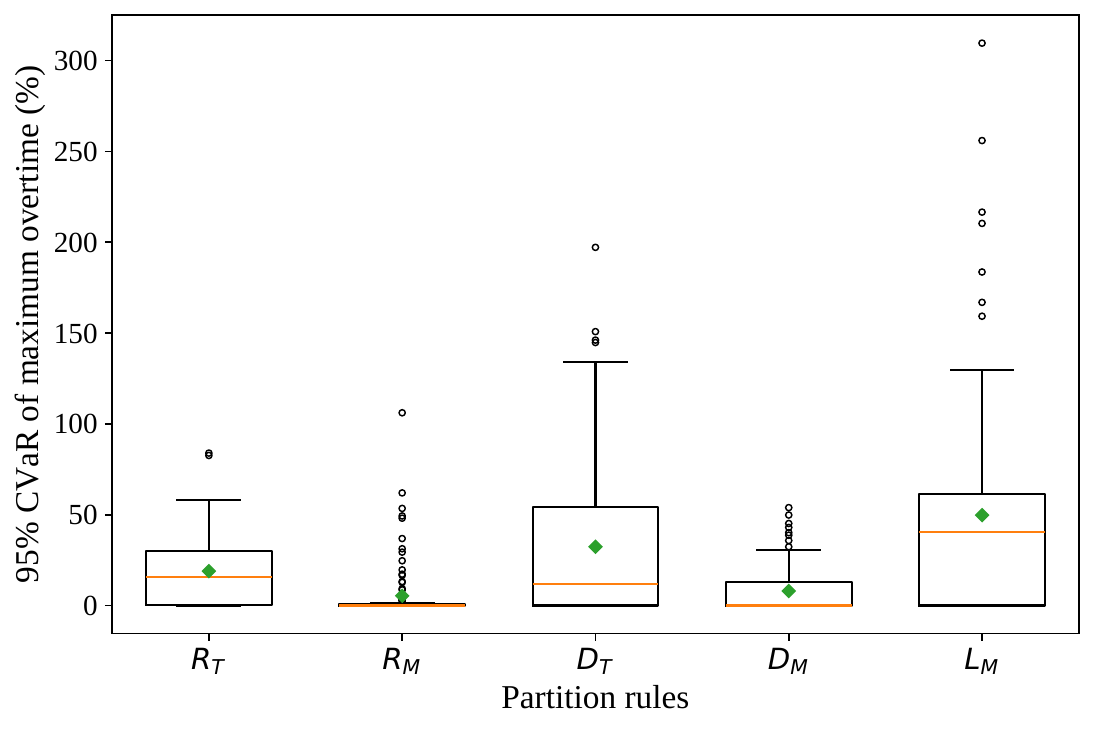}
        \label{fig:cvarME}
    }%

    \vspace{-0.4em}

    \subfigure[Aggregate overtime rate (pp)]{
        \includegraphics[width=0.48\textwidth]{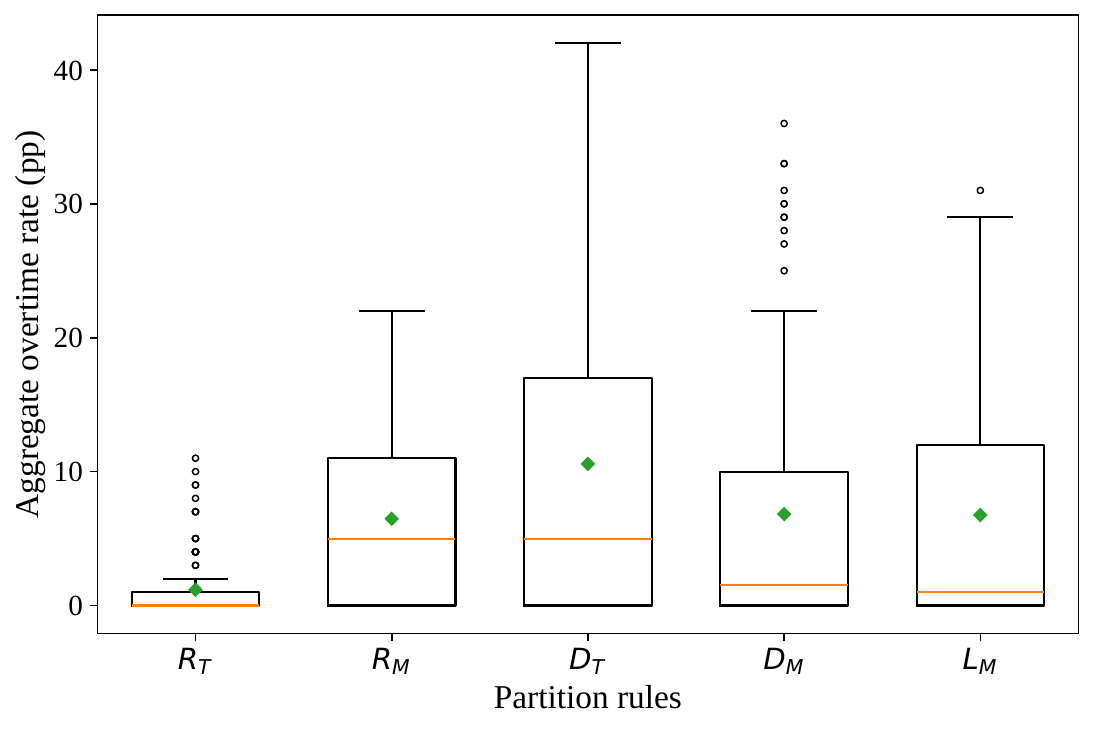}
        \label{fig:rateTE}
    }%
    \hfill
    \subfigure[Maximum overtime rate (pp)]{
        \includegraphics[width=0.48\textwidth]{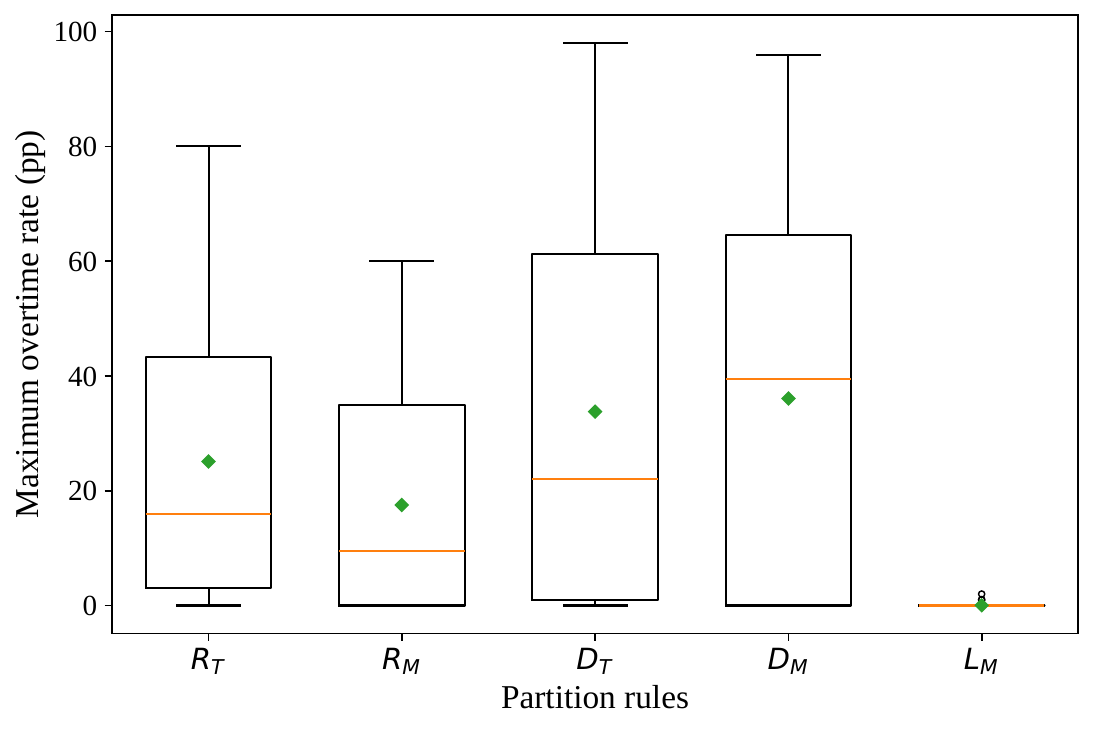}
        \label{fig:rateME}
    }%

    \caption{Out-of-sample performance of partition rules (relative gap to the best-performing rule in panels (a)--(d); absolute gap in panels (e)--(f)).}
    \label{fig:partition}
\end{figure}
\bibliographystyleapp{informs2014trsc}
\bibliographyapp{appendix.bib}

\end{document}